\numberwithin{equation}{section}
\theoremstyle{plain}
\newtheorem{thm}{Theorem}[section]
\newtheorem{cor}{Corollary}[section]
\theoremstyle{remark}
\newtheorem{rem}{Remark}[section]
\DeclareMathOperator{\td}{d}
\DeclareMathOperator{\arcsinh}{arcsinh}
\DeclareMathOperator{\arctanh}{arctanh}
\DeclareMathOperator{\arccosh}{arccosh}
\DeclareMathOperator{\arcsec}{arcsec}
\DeclareMathOperator{\bell}{B}
\DeclareMathOperator{\ls}{Ls}
\DeclareMathOperator{\te}{e}
\DeclareMathOperator{\ti}{i}
\begin{document}

\title[Series expansions of powers of inverse sine and tangent]
{Maclaurin's series expansions for positive integer powers of inverse (hyperbolic) sine and related functions, specific values of partial Bell polynomials, and two applications}

\author[B.-N. Guo]{Bai-Ni Guo}
\address{School of Mathematics and Informatics, Henan Polytechnic University, Jiaozuo 454010, Henan, China}
\email{bai.ni.guo@gmail.com, bai.ni.guo@hotmail.com}
\urladdr{\url{https://orcid.org/0000-0001-6156-2590}}

\author[D. Lim]{Dongkyu Lim*}
\address{Department of Mathematics Education, Andong National University, Andong 36729, Republic of Korea}
\email{\href{mailto: D. Lim <dgrim84@gmail.com>}{dgrim84@gmail.com}, \href{mailto: D. Lim <dklim@anu.ac.kr>}{dklim@anu.ac.kr}}
\urladdr{\url{https://orcid.org/0000-0002-0928-8480}}

\author[F. Qi]{Feng Qi*}
\address{School of Mathematical Sciences, Tiangong University, Tianjin 300387, China}
\email{\href{mailto: F. Qi <qifeng618@gmail.com>}{qifeng618@gmail.com}, \href{mailto: F. Qi <qifeng618@hotmail.com>}{qifeng618@hotmail.com}, \href{mailto: F. Qi <qifeng618@qq.com>}{qifeng618@qq.com}}
\urladdr{\url{https://qifeng618.wordpress.com}, \url{https://orcid.org/0000-0001-6239-2968}}

\dedicatory{Dedicated to people facing and battling COVID-19}

\begin{abstract}
In the paper, the authors establish Maclaurin's series expansions and series identities for positive integer powers of the inverse sine function, for positive integer powers of the inverse hyperbolic sine function, for the composite of incomplete gamma functions with the inverse hyperbolic sine function, for positive integer powers of the inverse tangent function, and for positive integer powers of the inverse hyperbolic tangent function, in terms of the first kind Stirling numbers and binomial coefficients, apply the newly established Maclaurin's series expansion for positive integer powers of the inverse sine function to derive a closed-form formula for specific values of partial Bell polynomials and to derive a series representation of the generalized logsine function, and deduce several combinatorial identities involving the first kind Stirling numbers. Some of these results simplify and unify some known ones. All of these newly established Maclaurin's series expansions of positive integer powers of the inverse (hyperbolic) sine and tangent functions can be used to derive infinite series representations of the circular constant Pi and of positive integer powers of Pi.
\end{abstract}

\keywords{Maclaurin's series expansion; series identity; power; inverse sine function; inverse hyperbolic sine function; inverse tangent function; inverse hyperbolic tangent function; incomplete gamma function; closed-form formula; specific value; partial Bell polynomial; series representation; generalized logsine function; combinatorial identity; first kind Stirling number; falling factorial; extended Pochhammer symbol; Pi}

\subjclass{Primary 41A58; Secondary 05A19, 11B73, 11B83, 11C08, 26A39, 33B10, 33B15, 33B20}

\thanks{*Corresponding author}

\thanks{This paper was typeset using\AmS-\LaTeX}

\maketitle
\tableofcontents

\section{Outlines}

Basing on conventions in community of mathematics, we use the notations
\begin{align*}
\mathbb{N}&=\{1,2,\dotsc\}, & \mathbb{N}_-&=\{-1,-2,\dotsc\}, & \mathbb{N}_0&=\{0,1,2,\dotsc\},\\
\mathbb{Z}&=\{0,\pm1,\pm2,\dotsc\}, & \mathbb{R}&=(-\infty,\infty), & \mathbb{C}&=\bigl\{x+\ti y: x,y\in\mathbb{R}, \ti=\sqrt{-1}\,\bigr\}.
\end{align*}
\par
In general, it is not much difficult to obtain Maclaurin's series expansions of powers of (fundamental) elementary functions and hypergeometric functions. The difficulty has been demonstrated in the papers~\cite{Bessel-Power-Polyn.tex, Moll-Vignat-IJNT-2014, AJOM-D-16-00138.tex}, for example. Many special cases of Maclaurin's series expansions of the positive integer power $(\arcsin t)^m$ for $m\in\mathbb{N}$ have been reviewed and surveyed in the paper~\cite{AIMS-Math20210491.tex}.
\par
In Section~\ref{sec-arcsin-series-expan} of this paper, we will discover Maclaurin's series expansions of the power functions $\bigl(\frac{\arcsin t}{t}\bigr)^m$ and $\frac{(\arcsin t)^{m}}{\sqrt{1-t^2}\,}$ for $m\in\mathbb{N}$. These series expansions simplify and unify previous results in~\cite[Section~2 and~5]{AIMS-Math20210491.tex}. In this section, we will also derive two combinatorial identities for finite sums involving the first kind Stirling numbers $s(n,k)$, which can be generated~\cite{Charalambides-book-2002, Comtet-Combinatorics-74} by
\begin{equation}\label{gen-funct-3}
\frac{[\ln(1+x)]^k}{k!}=\sum_{n=k}^\infty s(n,k)\frac{x^n}{n!},\quad |x|<1
\end{equation}
and satisfy diagonal recursive relations
\begin{equation*}
\frac{s(n+k,k)}{\binom{n+k}{k}}
=\sum_{\ell=0}^{n} (-1)^{\ell}\frac{\langle k\rangle_{\ell}}{\ell!} \sum_{m=0}^\ell(-1)^m\binom{\ell}{m}\frac{s(n+m,m)}{\binom{n+m}{m}}
\end{equation*}
and
\begin{align*}
s(n,k)&=(-1)^{k}\sum_{m=1}^{n}(-1)^{m}\sum_{\ell=k-m}^{k-1}(-1)^{\ell} \binom{n}{\ell}\binom{\ell}{k-m} s(n-\ell,k-\ell)\\ 
&=(-1)^{n-k}\sum_{\ell=0}^{k-1}(-1)^\ell\binom{n}{\ell} \binom{\ell-1}{k-n-1}s(n-\ell,k-\ell)
\end{align*}
in~\cite[p.~23, Theorem~1.1]{1st-Stirl-No-adjust.tex} and~\cite[p.~156, Theorem~4]{AADM-2821.tex}.
\par
In Section~\ref{sec-Bell-special-values}, applying Maclaurin's series expansion of $\bigl(\frac{\arcsin t}{t}\bigr)^m$ established in Section~\ref{sec-arcsin-series-expan}, we will present a closed-form formula of specific values
\begin{equation}\label{special-values-Oertel}
\bell_{2n,k}\biggl(0,\frac{1}{3},0,\frac{9}{5},0,\frac{225}{7},\dotsc, \frac{1+(-1)^{k+1}}{2}\frac{[(2n-k)!!]^2}{2n-k+2}\biggr)
\end{equation}
for $2n\ge k\in\mathbb{N}$, where partial Bell polynomials $\bell_{n,k}$ for $n\ge k\in\mathbb{N}_0$ are defined in~\cite[Definition~11.2]{Charalambides-book-2002} and~\cite[p.~134, Theorem~A]{Comtet-Combinatorics-74} by
\begin{equation}\label{Bell2nd-Dfn-Eq}
\bell_{n,k}(x_1,x_2,\dotsc,x_{n-k+1})=\sum_{\substack{1\le i\le n-k+1\\ \ell_i\in\mathbb{N}_0\\ \sum_{i=1}^{n-k+1}i\ell_i=n\\
\sum_{i=1}^{n-k+1}\ell_i=k}}\frac{n!}{\prod_{i=1}^{n-k+1}\ell_i!} \prod_{i=1}^{n-k+1}\biggl(\frac{x_i}{i!}\biggr)^{\ell_i}.
\end{equation}
This kind of polynomials are important in combinatorics, number theory, analysis, and other areas in mathematical sciences. In recent years, some new conclusions and applications of specific values for partial Bell polynomials $\bell_{n,k}$ have been reviewed and surveyed in~\cite{CDM-68111.tex, Bell-value-elem-funct.tex}. Our main result in Section~\ref{sec-Bell-special-values} simplifies and unifies those results in~~\cite[Sections~1 and~3]{AIMS-Math20210491.tex}.
\par
In Section~\ref{sec-series-representation-logsine}, applying Maclaurin's series expansion of the power $\bigl(\frac{\arcsin t}{t}\bigr)^m$ established in Section~\ref{sec-arcsin-series-expan}, we will derive a series representation of the generalized logsine function
\begin{equation}\label{generalized-logsine-dfn}
\ls_j^{(k)}(\theta)=-\int_{0}^{\theta}x^k\biggl(\ln\biggl|2\sin\frac{x}{2}\biggr|\biggr)^{j-k-1}\td x,
\end{equation}
where $j,k$ are integers with $j\ge k+1\in\mathbb{N}$ and $\theta$ is an arbitrary real number. The generalized logsine function $\ls_j^{(k)}(\theta)$ was originally introduced in~\cite[pp.~191--192]{Lewin-B-1981}. This series representation of the generalized logsine function $\ls_j^{(k)}(\theta)$ simplifies and unifies corresponding ones in~\cite[Section~4]{AIMS-Math20210491.tex} and~\cite{Davydychev-Kalmykov-2001, Kalmykov-Sheplyakov-lsjk-2005}.
\par
In Section~\ref{sec-hyperbolic-sine-id}, by similar methods used in Section~\ref{sec-arcsin-series-expan}, we will discover Maclaurin's series expansions of the positive integer powers $\bigl(\frac{\arcsinh t}{t}\bigr)^m$ for $m\in\mathbb{N}$, the composite $\Gamma(m,\arcsinh t)$ for $m\ge2$, and the exponential function $\te^{\arcsinh t}$, while we will also present three series identities involving $(\arcsinh t)^\ell$ for $\ell\ge2$ and the first kind Stirling numbers $s(n,k)$, where the incomplete gamma function $\Gamma(z,x)$ is defined~\cite{Jameson-MZ-2016, ingamma, qi-senlin-mia, Qi-Mei-99-gamma} by
\begin{equation}\label{incomplete-gamma-dfn}
\Gamma(z,x)=\int_{x}^{\infty}\te^{-t}t^{z-1}\td t
\end{equation}
for $\Re(z)>0$ and $x\in\mathbb{N}_0$.
\par
In Section~\ref{sec-2guesses}, basing on several known Maclaurin's series expansions for positive integer powers of the inverse tangent function $\arctan t$ and the inverse hyperbolic tangent function $\arctanh t$, we will guess and verify two explicit and general expressions of Maclaurin's series expansions of positive integer powers $(\arctan t)^n$ and $(\arctanh t)^n$ for $n\in\mathbb{N}$.
\par
In Section~\ref{sec-power-remarks}, we state useful remarks on our main results and related stuffs, including infinite series representations of positive integer powers of the circular constant $\pi$.

\section{Maclaurin's series expansion for positive integer powers of inverse sine function}\label{sec-arcsin-series-expan}

In~\cite[Remarks~5.2 to~5.5]{AIMS-Math20210491.tex}, there is a review and a survey of special cases of Maclaurin's series expansions of $(\arcsin t)^\ell$ for $\ell\in\mathbb{N}$. In~\cite[Section~2]{AIMS-Math20210491.tex}, general expressions for Maclaurin's series expansions of $(\arcsin t)^{2\ell-1}$ and $(\arcsin t)^{2\ell}$ for $\ell\in\mathbb{N}$ were established respectively.
\par
In this section, we discover a nice, simpler, and general expression of Maclaurin's series expansions of the functions $(\arcsin t)^{m}$ and $\frac{(\arcsin t)^{m}}{\sqrt{1-t^2}\,}$ for $m\in\mathbb{N}$ and derive two combinatorial identities for finite sums involving the first kind Stirling numbers $s(n,k)$.

\begin{thm}\label{arcsin-series-expansion-unify-thm}
For $m\in\mathbb{N}$ and $|t|<1$, the function $\bigl(\frac{\arcsin t}{t}\bigr)^{m}$, whose value at $t=0$ is defined to be $1$, has Maclaurin's series expansion
\begin{equation}\label{arcsin-series-expansion-unify}
\biggl(\frac{\arcsin t}{t}\biggr)^{m}
=1+\sum_{k=1}^{\infty} (-1)^k\frac{Q(m,2k;2)}{\binom{m+2k}{m}}\frac{(2t)^{2k}}{(2k)!},
\end{equation}
where
\begin{equation}\label{Q(m-k)-sum-dfn}
Q(m,k;\alpha)=\sum_{\ell=0}^{k} \binom{m+\ell-1}{m-1} s(m+k-1,m+\ell-1)\biggl(\frac{m+k-\alpha}{2}\biggr)^{\ell}
\end{equation}
for $m,k\in\mathbb{N}$ and $\alpha\in\mathbb{R}$ such that $m+k\ne\alpha$ and $s(m+k-1,m+\ell-1)$ is generalized by~\eqref{gen-funct-3}.
\end{thm}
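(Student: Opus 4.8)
The plan is to route everything through the generating function $\te^{a\arcsin t}$, whose Maclaurin coefficients in $t$ are elementary, and to translate the resulting products into the first kind Stirling numbers only at the very end. First I would check by direct differentiation that $y=\te^{a\arcsin t}$ solves the linear equation $(1-t^2)y''-ty'-a^2y=0$, and then solve this by a power series $y=\sum_{n\ge0}c_n(a)t^n$. The recurrence $c_{n+2}=\frac{n^2+a^2}{(n+1)(n+2)}c_n$ with $c_0=1$ and $c_1=a$ separates by parity and, since $\arcsin$ is odd so that the even part in $t$ is $\frac12\bigl(\te^{a\arcsin t}+\te^{-a\arcsin t}\bigr)=\cosh(a\arcsin t)$, yields
\begin{equation*}
\cosh(a\arcsin t)=\sum_{k\ge0}\frac{t^{2k}}{(2k)!}\prod_{j=0}^{k-1}\bigl[a^2+(2j)^2\bigr]
\end{equation*}
together with the companion formula for $\sinh(a\arcsin t)$ carrying $(2j+1)^2$ in place of $(2j)^2$. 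Because $(\arcsin t)^m$ has the parity of $m$, extracting the coefficient of $a^m$ via $\frac{(\arcsin t)^m}{m!}=[a^m]\,\te^{a\arcsin t}$ expresses each Maclaurin coefficient of $(\arcsin t)^m$ as an elementary symmetric function $e_r$ of the integer squares $\{(2j)^2\}$ (for $m$ even) or of the odd squares $\{(2j+1)^2\}$ (for $m$ odd).

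Dividing by $t^m$ and rescaling, comparison with the claimed series~\eqref{arcsin-series-expansion-unify} reduces, after clearing the binomial $\binom{m+2k}{m}$ and the powers of $2$, to a single identity: that $Q(m,2k';2)$ coincides, up to an explicit sign and a power of $4$, with the elementary symmetric function of squares just produced. The crucial move is to re-read the definition~\eqref{Q(m-k)-sum-dfn} of $Q(m,k;\alpha)$ as a derivative of a falling factorial. Using the standard consequence $\sum_{j}s(n,j)z^j=z(z-1)\dotsm(z-n+1)$ of~\eqref{gen-funct-3} (obtained by summing $\sum_k \frac{[\ln(1+x)]^k}{k!}z^k=(1+x)^z=\sum_n\frac{z(z-1)\dotsm(z-n+1)}{n!}x^n$ and comparing coefficients of $x^n$), together with the operator identity $\binom{j}{m-1}z^{j}=\frac{z^{m-1}}{(m-1)!}\frac{\td^{m-1}}{\td z^{m-1}}z^{j}$, the weight $\binom{m+\ell-1}{m-1}$ in $Q$ is absorbed and I would obtain
\begin{equation*}
Q(m,k;\alpha)=\frac{1}{(m-1)!}\left[\frac{\td^{m-1}}{\td z^{m-1}}\bigl(z(z-1)\dotsm(z-m-k+2)\bigr)\right]_{z=\frac{m+k-\alpha}{2}}.
\end{equation*}

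Finally I would evaluate this derivative by centering the falling factorial at its evaluation point $z_0=\frac{m+k-\alpha}{2}$. Substituting $z=z_0+v$ turns $z(z-1)\dotsm(z-m-k+2)$ into $v\prod_{i}(v^2-i^2)$ when $m$ is even and into $\prod_{i}\bigl(v^2-(i-\tfrac12)^2\bigr)$ when $m$ is odd; in either case it is, up to the factor $v$, an even polynomial in $v$, so the normalized $(m-1)$-st derivative at $v=0$ is literally one of its Taylor coefficients, namely an elementary symmetric function of the integer squares $i^2$ or of the half-integer squares $(i-\tfrac12)^2$. A rescaling by the appropriate power of $4$ then identifies it with the elementary symmetric function of $\{(2j)^2\}$ or of $\{(2j+1)^2\}$ coming from the first paragraph, which closes the loop and establishes~\eqref{arcsin-series-expansion-unify} (the value $1$ at $t=0$ being the $k=0$ term). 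I expect the main obstacle to be exactly this last bookkeeping across the two parities — keeping the powers of $2$, the overall sign $(-1)^k$, and the index shifts between the $\cosh$ and $\sinh$ branches perfectly consistent — whereas the differential equation and the Stirling-to-falling-factorial translation are routine once set up.
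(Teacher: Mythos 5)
Your proposal is correct, but it takes a genuinely different route from the paper. The paper starts from the tabulated identities~\eqref{arcsin-pochhammer} and~\eqref{JO(833)} (Hansen, Borwein--Chamberland), which express $\te^{2a\arcsin x}$ through extended Pochhammer/binomial symbols with the \emph{imaginary} parameter $\ti a$; it then applies~\eqref{(12.1)-Quaintance} to $\binom{\ti a+\frac k2-1}{k-1}$, expands in powers of $\ti a$, and compares coefficients of $a^m$. In that setup the quantity $Q(m,2k;2)$ of~\eqref{Q(m-k)-sum-dfn} is never evaluated: it simply \emph{is} the coefficient that emerges, so the identification is automatic. You instead stay entirely over the reals: the ODE $(1-t^2)y''-ty'-a^2y=0$ gives the classical product form of the coefficients of $\te^{a\arcsin t}$ (this is exactly the Schwatt/Ramanujan expansion~\eqref{exp-arsin-b(t)0series} that the paper only quotes in a remark), so extracting $[a^m]$ yields elementary symmetric functions of even or odd integer squares; you then need an extra lemma the paper does not: that $Q(m,k;\alpha)$ equals $\frac{1}{(m-1)!}\bigl[\frac{\td^{m-1}}{\td z^{m-1}}\langle z\rangle_{m+k-1}\bigr]_{z=(m+k-\alpha)/2}$ (correct, by~\eqref{(12.1)-Quaintance} and $\binom{j}{m-1}z^{j-m+1}=\frac{1}{(m-1)!}\frac{\td^{m-1}}{\td z^{m-1}}z^j$), and that this derivative, evaluated at the exact midpoint of the roots $0,1,\dotsc,m+2k-2$, reduces by your centering substitution to a signed coefficient of an even polynomial. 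I checked the bookkeeping you flagged as the main risk and it closes: for $m=2r$ one gets $Q(m,2k;2)=(-1)^ke_k\bigl(1^2,\dotsc,(r+k-1)^2\bigr)$, for $m=2r+1$ one gets $Q(m,2k;2)=(-1)^ke_k\bigl((\tfrac12)^2,\dotsc,(r+k-\tfrac12)^2\bigr)$, and the powers of $4$ and the factor $\frac{(m)!(2k)!}{(m+2k)!}=\binom{m+2k}{m}^{-1}$ match~\eqref{arcsin-series-expansion-unify} exactly. What each approach buys: the paper's argument is shorter because $Q$ appears by construction, but it leans on external table formulas with imaginary arguments; your argument is self-contained and, as a bonus, produces a closed-form evaluation of $Q(m,2k;2)$ as an elementary symmetric function of squares --- a sign-definite expression that immediately implies~\eqref{2n-stirling-n!square} (take $r=1$, since $e_k(1^2,\dotsc,k^2)=(k!)^2$), yields the vanishing identities of Corollary~\ref{arcsin-deriv-comb-id-cor} by parity, and partially answers the simplification question the paper raises about~\eqref{Q(m-k)-sum-dfn} in its remarks.
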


\begin{proof}[First proof]
In~\cite[p.~3, (2.7)]{Borwein-Chamberland-IJMMS-2007} and~\cite[pp.~210--211, (10.49.33) and~(10.49.34)]{Hansen-B-1975}, there are the formulas
\begin{equation}\label{arcsin-pochhammer}
\sum_{k=0}^{\infty}\frac{(\ti a)_{k/2}}{(\ti a+1)_{-k/2}}\frac{(-\ti x)^k}{k!}
=\exp\biggl(2a\arcsin\frac{x}{2}\biggr)
\end{equation}
and
\begin{equation}\label{JO(833)}
\sum_{k=0}^{\infty}\frac{\bigl(\ti a+\frac{1}{2}\bigr)_{k/2}}{\bigl(\ti a+\frac{1}{2}\bigr)_{-k/2}} \frac{(-\ti x)^k}{k!}
=\frac{2}{\sqrt{4-x^2}\,}\exp\biggl(2a\arcsin\frac{x}{2}\biggr),
\end{equation}
where $\ti=\sqrt{-1}$ is the imaginary unit, the extended Pochhammer symbol $(z)_\alpha$ for $z,\alpha\in\mathbb{C}$ such that $z+\alpha\ne0,-1,-2,\dotsc$ is defined by
\begin{equation}\label{extended-Pochhammer-dfn}
(z)_{\alpha}=\frac{\Gamma(z+\alpha)}{\Gamma(z)},
\end{equation}
and the Euler gamma function $\Gamma(z)$ is defined~\cite[Chapter~3]{Temme-96-book} by
\begin{equation*}
\Gamma(z)=\lim_{n\to\infty}\frac{n!n^z}{\prod_{k=0}^n(z+k)}, \quad z\in\mathbb{C}\setminus\{0,-1,-2,\dotsc\}.
\end{equation*}
In the formulas~\eqref{arcsin-pochhammer} and~\eqref{JO(833)}, replacing $x$ by $2x$ and employing the extended Pochhammer symbol in~\eqref{extended-Pochhammer-dfn} gives
\begin{equation}\label{arcsin-pochhammer-binomial}
\begin{split}
\te^{2a\arcsin x}
&=\sum_{k=0}^{\infty}(-2\ti)^k \frac{\Gamma\bigl(\ti a+\frac{k}{2}\bigr)}{\Gamma(\ti a)} \frac{\Gamma(\ti a+1)}{\Gamma\bigl(\ti a-\frac{k}{2}+1\bigr)}\frac{x^k}{k!}\\
&=1+\ti a\sum_{k=1}^{\infty}(-2\ti)^k \binom{\ti a+\frac{k}{2}-1}{k-1}\frac{x^k}{k}
\end{split}
\end{equation}
and
\begin{equation}\label{JO(833)-x2(2x)}
\begin{aligned}
\frac{\te^{2a\arcsin x}}{\sqrt{1-x^2}\,}
&=\sum_{k=0}^{\infty} (-2\ti)^k\frac{\Gamma\bigl(\ti a+\frac{1+k}{2}\bigr)}{\Gamma\bigl(\ti a+\frac{1-k}{2}\bigr)}\frac{x^k}{k!}\\
&=\sum_{k=0}^{\infty} (-2\ti)^k\binom{\ti a+\frac{k-1}{2}}{k}x^k,
\end{aligned}
\end{equation}
where the extended binomial coefficient $\binom{z}{w}$ is defined by
\begin{equation}\label{Gen-Coeff-Binom}
\binom{z}{w}=
\begin{dcases}
\frac{\Gamma(z+1)}{\Gamma(w+1)\Gamma(z-w+1)}, & z\not\in\mathbb{N}_-,\quad w,z-w\not\in\mathbb{N}_-\\
0, & z\not\in\mathbb{N}_-,\quad w\in\mathbb{N}_- \text{ or } z-w\in\mathbb{N}_-\\
\frac{\langle z\rangle_w}{w!},& z\in\mathbb{N}_-, \quad w\in\mathbb{N}_0\\
\frac{\langle z\rangle_{z-w}}{(z-w)!}, & z,w\in\mathbb{N}_-, \quad z-w\in\mathbb{N}_0\\
0, & z,w\in\mathbb{N}_-, \quad z-w\in\mathbb{N}_-\\
\infty, & z\in\mathbb{N}_-, \quad w\not\in\mathbb{Z}
\end{dcases}
\end{equation}
in terms of the gamma function $\Gamma(z)$ and the falling factorial
\begin{equation}\label{Fall-Factorial-Dfn-Eq}
\langle z\rangle_k=
\prod_{\ell=0}^{k-1}(z-\ell)=
\begin{cases}
z(z-1)\dotsm(z-k+1), & k\in\mathbb{N};\\
1,& k=0.
\end{cases}
\end{equation}
Integrating on both sides of~\eqref{JO(833)-x2(2x)} with respect to $x\in(0,t)\subset(-1,1)$ leads to
\begin{equation}\label{JO(833)-x2(2x)-INT}
\frac{\te^{2a\arcsin t}-1}{2a}
=t+at^2+\sum_{k=2}^{\infty} (-2\ti)^k\binom{\ti a+\frac{k-1}{2}}{k}\frac{t^{k+1}}{k+1}.
\end{equation}
\par
In~\cite[p.~165, (12.1)]{Quaintance-Gould-2016-B}, there is the formula
\begin{equation}\label{(12.1)-Quaintance}
k!\binom{z}{k}=\sum_{\ell=0}^{k}s(k,\ell)z^\ell, \quad z\in\mathbb{C}.
\end{equation}
Therefore, we acquire
\begin{align*}
\binom{\ti a+\frac{k}{2}-1}{k-1}&=\frac{1}{(k-1)!}\sum_{\ell=0}^{k-1}s(k-1,\ell)\biggl(\ti a+\frac{k}{2}-1\biggr)^\ell\\
&=\frac{1}{(k-1)!}\sum_{\ell=0}^{k-1}s(k-1,\ell)\sum_{m=0}^{\ell}\binom{\ell}{m}(\ti a)^m\biggl(\frac{k-2}{2}\biggr)^{\ell-m}\\
&=\frac{1}{(k-1)!}\sum_{m=0}^{k-1}\Biggl[\sum_{\ell=m}^{k-1} \binom{\ell}{m} s(k-1,\ell) \biggl(\frac{k-2}{2}\biggr)^{\ell-m}\Biggr](\ti a)^m
\end{align*}
for $k\ge3$. Substituting this into the right hand side of~\eqref{arcsin-pochhammer-binomial} yields
\begin{align*}
&\quad 1+\ti a\sum_{k=1}^{\infty}(-2\ti)^k \binom{\ti a+\frac{k}{2}-1}{k-1}\frac{x^k}{k}\\
&=1+2ax+2a^2x^2+\sum_{k=3}^{\infty}(-2\ti)^k \sum_{m=0}^{k-1}\Biggl[\sum_{\ell=m}^{k-1} \binom{\ell}{m} s(k-1,\ell) \biggl(\frac{k-2}{2}\biggr)^{\ell-m}\Biggr](\ti a)^{m+1}\frac{x^k}{k!}\\
&=1+\Biggl[2x+\sum_{k=3}^{\infty}(-2)^k\ti^{k+1}Q(1,k-1;2)\frac{x^k}{k!}\Biggr]a\\
&\quad+\Biggl(2x^2-\sum_{k=3}^{\infty}(-2\ti)^k \Biggl[\sum_{\ell=1}^{k-1}\ell s(k-1,\ell) \biggl(\frac{k-2}{2}\biggr)^{\ell-1}\Biggr]\frac{x^k}{k!}\Biggr)a^{2}\\
&\quad+\sum_{m=3}^{\infty}\ti^m \Biggl(\sum_{k=m}^{\infty}(-2\ti)^k\Biggl[\sum_{\ell=m}^{k} \binom{\ell-1}{m-1} s(k-1,\ell-1) \biggl(\frac{k-2}{2}\biggr)^{\ell-m}\Biggr]\frac{x^k}{k!}\Biggr)a^{m}.
\end{align*}
The left hand side of~\eqref{arcsin-pochhammer-binomial} can be expanded into
\begin{equation*}
\te^{2a\arcsin x}=\sum_{m=0}^{\infty}\frac{(2a\arcsin x)^m}{m!}=\sum_{m=0}^{\infty}\frac{(2\arcsin x)^m}{m!}a^m.
\end{equation*}
Comparing coefficients of $a^m$ for $m\in\mathbb{N}$ results in
\begin{align}
\begin{split}\label{arcsin-FIRST-im}
2\arcsin x&=2x+\sum_{k=3}^{\infty}(-2)^k\ti^{k+1}Q(1,k-1;2)\frac{x^k}{k!}\\
&=2x\Biggl[1+\sum_{k=1}^{\infty}\frac{(-1)^{k}}{2k+1}Q(1,2k;2) \frac{(2x)^{2k}}{(2k)!}\Biggr],
\end{split}\\
\begin{split}\label{arcsin-square-FIRST-im}
\frac{(2\arcsin x)^2}{2!}&=2x^2-\sum_{k=3}^{\infty}(-2\ti)^k \Biggl[\sum_{\ell=1}^{k-1}\ell s(k-1,\ell) \biggl(\frac{k-2}{2}\biggr)^{\ell-1}\Biggr]\frac{x^k}{k!}\\
&=2x^2\Biggl[1+\sum_{k=1}^{\infty}\frac{2(-1)^{k}}{(2k+2)(2k+1)} Q(2,2k;2) \frac{(2x)^{2k}}{(2k)!}\Biggr],
\end{split}
\end{align}
and
\begin{equation}
\begin{aligned}\label{arcsin-Power-FIRST-im}
\frac{(2\arcsin x)^m}{m!}
&=\ti^m\sum_{k=m}^{\infty}(-2\ti)^k\Biggl[\sum_{\ell=m}^{k} \binom{\ell-1}{m-1} s(k-1,\ell-1) \biggl(\frac{k-2}{2}\biggr)^{\ell-m}\Biggr]\frac{x^k}{k!}\\
&=\frac{(2x)^m}{m!}\sum_{k=0}^{\infty}\frac{(-1)^{k}}{\binom{m+2k}{m}}Q(m,2k;2) \frac{(2x)^{2k}}{(2k)!}
\end{aligned}
\end{equation}
for $m\ge3$. Maclaurin's series expansion~\eqref{arcsin-series-expansion-unify} in Theorem~\ref{arcsin-series-expansion-unify-thm} is proved. The first proof of Theorem~\ref{arcsin-series-expansion-unify-thm} is complete.
\end{proof}

\begin{proof}[Second proof]
By virtue of~\eqref{(12.1)-Quaintance} again, we obtain
\begin{align*}
\binom{\ti a+\frac{k-1}{2}}{k}&=\frac{1}{k!}\sum_{\ell=0}^{k}s(k,\ell)\biggl(\ti a+\frac{k-1}{2}\biggr)^\ell\\
&=\frac{1}{k!}\sum_{\ell=0}^{k}s(k,\ell)\sum_{m=0}^{\ell} \binom{\ell}{m}(\ti a)^m \biggl(\frac{k-1}{2}\biggr)^{\ell-m}\\
&=\frac{1}{k!}\sum_{m=0}^{k}\Biggl[\sum_{\ell=m}^{k}\binom{\ell}{m} s(k,\ell) \biggl(\frac{k-1}{2}\biggr)^{\ell-m}\Biggr](\ti a)^m
\end{align*}
for $k\ge2$. Substituting this into the right hand side of~\eqref{JO(833)-x2(2x)-INT} results in
\begin{align*}
&\quad t+at^2+\sum_{k=2}^{\infty} (-2\ti)^k\binom{\ti a+\frac{k-1}{2}}{k}\frac{t^{k+1}}{k+1}\\
&=t+at^2+\sum_{k=2}^{\infty} (-2)^k\Biggl(\sum_{m=0}^{k}\ti^{k+m}\Biggl[\sum_{\ell=m}^{k} \binom{\ell}{m} s(k,\ell) \biggl(\frac{k-1}{2}\biggr)^{\ell-m}\Biggr] a^m\Biggr) \frac{t^{k+1}}{(k+1)!}\\
&=t+at^2+\sum_{k=2}^{\infty} (-2)^k \ti^{k}Q(1,k;2)\frac{t^{k+1}}{(k+1)!}\\
&\quad+\sum_{k=2}^{\infty} (-2)^k\Biggl(\Biggl[\ti^{k+1}\sum_{\ell=1}^{k}\ell s(k,\ell) \biggl(\frac{k-1}{2}\biggr)^{\ell-1}\Biggr]a\Biggr) \frac{t^{k+1}}{(k+1)!}\\
&\quad+\sum_{k=2}^{\infty} (-2)^k\Biggl(\sum_{m=2}^{k}\Biggl[\ti^{k+m}\sum_{\ell=m}^{k} \binom{\ell}{m} s(k,\ell) \biggl(\frac{k-1}{2}\biggr)^{\ell-m}\Biggr]a^m\Biggr) \frac{t^{k+1}}{(k+1)!}\\
&=t+\sum_{k=2}^{\infty} (-2)^k \ti^{k}Q(1,k;2) \frac{t^{k+1}}{(k+1)!} \\
&\quad+\Biggl(t^2+\sum_{k=2}^{\infty}\ti^{k+1}(-2)^k\Biggl[\sum_{\ell=1}^{k}\ell s(k,\ell) \biggl(\frac{k-1}{2}\biggr)^{\ell-1}\Biggr] \frac{t^{k+1}}{(k+1)!}\Biggr)a\\
&\quad+\sum_{m=2}^{\infty}\Biggl(\sum_{k=m}^{\infty} (-2)^k\Biggl[\ti^{k+m}\sum_{\ell=m}^{k} \binom{\ell}{m} s(k,\ell) \biggl(\frac{k-1}{2}\biggr)^{\ell-m}\Biggr]\frac{t^{k+1}}{(k+1)!}\Biggr)a^m.
\end{align*}
The series expansion of the left hand side in~\eqref{JO(833)-x2(2x)-INT} is
\begin{equation}\label{left-series-expansion}
\frac{\te^{2a\arcsin t}-1}{2a}
=\sum_{m=1}^{\infty}\frac{(2a)^{m-1}(\arcsin t)^m}{m!}
=\sum_{m=0}^{\infty}\frac{2^{m}(\arcsin t)^{m+1}}{(m+1)!}a^{m}.
\end{equation}
Equating coefficients of $a^m$ for $m\in\mathbb{N}_0$ in the series~\eqref{left-series-expansion} and its previous one produces
\begin{align}
\begin{split}\label{arcsin-im}
\arcsin t&=t+\sum_{k=2}^{\infty} (-2)^k \ti^{k}Q(1,k;2) \frac{t^{k+1}}{(k+1)!}\\
&=t\Biggl[1+\sum_{k=1}^{\infty} (-2)^{2k}\ti^{2k}Q(1,2k;2)\frac{t^{2k}}{(2k+1)!}\Biggr]\\
&=t\Biggl[1+\sum_{k=1}^{\infty}\frac{(-1)^{k}}{2k+1}Q(1,2k;2) \frac{(2t)^{2k}}{(2k)!}\Biggr],
\end{split}\\
\begin{split}\label{arcsin-square-im}
(\arcsin t)^2&=t^2+\sum_{k=2}^{\infty}\ti^{k+1}(-2)^k\Biggl[\sum_{\ell=1}^{k}\ell s(k,\ell) \biggl(\frac{k-1}{2}\biggr)^{\ell-1}\Biggr] \frac{t^{k+1}}{(k+1)!}\\
&=t^2\Biggl(1+\sum_{k=1}^{\infty}\ti^{2k+2}(-2)^{2k+1}\Biggl[\sum_{\ell=1}^{2k+1}\ell s(2k+1,\ell) \biggl(\frac{2k}{2}\biggr)^{\ell-1}\Biggr] \frac{t^{2k}}{(2k+2)!}\Biggr)\\
&=t^2\Biggl[1+\sum_{k=1}^{\infty}\frac{(-1)^{k}}{(k+1)(2k+1)}Q(2,2k;2) \frac{(2t)^{2k}}{(2k)!}\Biggr],
\end{split}
\end{align}
and
\begin{equation}\label{arcsin-Power-m-im}
\begin{aligned}
\frac{2^{m}(\arcsin t)^{m+1}}{(m+1)!}
&=\frac{2^{m}t^{m+1}}{(m+1)!}\Biggl((-1)^m\ti^{m}\sum_{k=m}^{\infty} (m+1)!2^{k-m}\\
&\quad\times\Biggl[\ti^{k}\sum_{\ell=m}^{k} \binom{\ell}{m} s(k,\ell) \biggl(\frac{k-1}{2}\biggr)^{\ell-m}\Biggr]\frac{t^{k-m}}{(k+1)!}\Biggr)\\
&=\frac{2^{m}t^{m+1}}{(m+1)!}\Biggl(1+(-1)^m\ti^{m}\sum_{k=m+1}^{\infty} (m+1)!2^{k-m}\\
&\quad\times\Biggl[\ti^{k}\sum_{\ell=m}^{k} \binom{\ell}{m} s(k,\ell) \biggl(\frac{k-1}{2}\biggr)^{\ell-m}\Biggr]\frac{t^{k-m}}{(k+1)!}\Biggr)\\
&=\frac{2^{m}t^{m+1}}{(m+1)!}\Biggl[1+\sum_{k=1}^{\infty} (m+1)! \ti^{k}Q(m+1,k;2)\frac{(2t)^{k}}{(k+m+1)!}\Biggr]\\
&=\frac{2^{m}t^{m+1}}{(m+1)!}\Biggl[1+\sum_{k=1}^{\infty} (-1)^k\frac{(m+1)!(2k)!}{(2k+m+1)!} Q(m+1,2k;2)\frac{(2t)^{2k}}{(2k)!}\Biggr]
\end{aligned}
\end{equation}
for $m\ge2$.
Maclaurin's series expansion~\eqref{arcsin-series-expansion-unify} in Theorem~\ref{arcsin-series-expansion-unify-thm} is proved once again. The second proof of Theorem~\ref{arcsin-series-expansion-unify-thm} is complete.
\end{proof}

\begin{cor}\label{arcsin-diiff-series-cor}
For $m\in\mathbb{N}_0$ and $|t|<1$, we have Maclaurin's series expansion
\begin{equation}\label{arcsin-diiff-series}
\frac{(\arcsin t)^{m}}{\sqrt{1-t^2}\,}
=t^m\Biggl[1+\sum_{k=1}^{\infty} (-1)^k\frac{Q(m+1,2k;2)}{\binom{m+2k}{m}}\frac{(2t)^{2k}}{(2k)!}\Biggr],
\end{equation}
where $Q(m,k;2)$ is defined by~\eqref{Q(m-k)-sum-dfn}.
\end{cor}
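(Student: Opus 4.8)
The plan is to deduce this corollary directly from Theorem~\ref{arcsin-series-expansion-unify-thm} by a single differentiation, rather than re-running the coefficient-comparison argument on the generating function~\eqref{JO(833)-x2(2x)}. The point of departure is the elementary identity
\begin{equation*}
\frac{\td}{\td t}(\arcsin t)^{m+1}=(m+1)\frac{(\arcsin t)^{m}}{\sqrt{1-t^2}\,},\qquad m\in\mathbb{N}_0,\quad |t|<1,
\end{equation*}
which realizes the left-hand side of~\eqref{arcsin-diiff-series} as $\frac{1}{m+1}$ times the derivative of a power whose Maclaurin expansion is already in hand.

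First I would invoke~\eqref{arcsin-series-expansion-unify} with $m$ replaced by $m+1$ (legitimate, since $m+1\in\mathbb{N}$ whenever $m\in\mathbb{N}_0$) and rewrite it as the honest power series
\begin{equation*}
(\arcsin t)^{m+1}=t^{m+1}+\sum_{k=1}^{\infty}(-1)^k\frac{Q(m+1,2k;2)}{\binom{m+1+2k}{m+1}}\frac{2^{2k}}{(2k)!}\,t^{m+1+2k}.
\end{equation*}
Since these are the Taylor coefficients of a power of $\arcsin t$, which is analytic on $(-1,1)$, the series converges locally uniformly there, so term-by-term differentiation is permissible on every compact subinterval. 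Differentiating and dividing by $m+1$ would then produce the expansion of $\frac{(\arcsin t)^{m}}{\sqrt{1-t^2}\,}$ with generic coefficient
\begin{equation*}
(-1)^k\frac{Q(m+1,2k;2)}{\binom{m+1+2k}{m+1}}\frac{2^{2k}}{(2k)!}\,\frac{m+1+2k}{m+1}.
\end{equation*}

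The only step asking for genuine attention is the binomial bookkeeping that brings this coefficient to the form asserted in~\eqref{arcsin-diiff-series}; concretely I would verify the one-line cancellation
\begin{equation*}
\frac{m+1+2k}{m+1}\,\frac{1}{\binom{m+1+2k}{m+1}}=\frac{(m+1+2k)\,m!\,(2k)!}{(m+1+2k)!}=\frac{m!\,(2k)!}{(m+2k)!}=\frac{1}{\binom{m+2k}{m}},
\end{equation*}
after which factoring out $t^m$ reproduces~\eqref{arcsin-diiff-series} verbatim. I do not anticipate any serious obstacle here: both the differentiation and this cancellation are routine. As a consistency check I would confirm that the boundary case $m=0$, where $\binom{2k}{0}=1$, recovers the classical expansion of $(1-t^2)^{-1/2}$, and, if a fully independent derivation were wanted, one may instead extract the coefficient of $a^m$ directly from~\eqref{JO(833)-x2(2x)} in the manner of the second proof of Theorem~\ref{arcsin-series-expansion-unify-thm}, which should deliver the same $Q(m+1,2k;2)$ without passing through the theorem.
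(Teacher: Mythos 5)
Your proposal is correct and is essentially the paper's own proof: the paper likewise multiplies the expansion~\eqref{arcsin-series-expansion-unify} by the appropriate power of $t$, differentiates term by term to produce $\frac{(\arcsin t)^{m}}{\sqrt{1-t^2}\,}$ via the chain rule, shifts the index $m$ by one, and simplifies the binomial coefficients exactly as in your cancellation $\frac{m+1+2k}{m+1}\binom{m+1+2k}{m+1}^{-1}=\binom{m+2k}{m}^{-1}$. The only differences are cosmetic — you re-index before differentiating instead of after, and you make explicit the (routine) justification of termwise differentiation that the paper leaves implicit.
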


\begin{proof}
Multiplying by $t^m$ and differentiating with respect to $t$ on both sides of~\eqref{arcsin-series-expansion-unify} in sequence yield
\begin{equation*}
\frac{m(\arcsin t)^{m-1}}{\sqrt{1-t^2}\,}
=mt^{m-1}+\sum_{k=1}^{\infty} \frac{(-1)^k}{\binom{2k+m}{m}}
Q(m,2k;2)\frac{2^{2k}(2k+m)t^{2k+m-1}}{(2k)!}.
\end{equation*}
Replacing $m-1$ by $m$ and simplifying lead to~\eqref{arcsin-diiff-series}.
Corollary~\ref{arcsin-diiff-series-cor} is thus proved.
\end{proof}

\begin{cor}\label{arcsin-deriv-comb-id-cor}
For $k,m\in\mathbb{N}$, we have the combinatorial identities
\begin{equation}\label{1st-stirling-2k+1}
Q(1,2k+1;2)=0
\end{equation}
and
\begin{equation}\label{3rd-comb-id}
Q(m+1,2k-1;2)=0,
\end{equation}
where $Q(m,k;2)$ is defined by~\eqref{Q(m-k)-sum-dfn}.
\end{cor}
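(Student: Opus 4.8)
Both displayed identities are two faces of the single assertion that $Q(M,2\kappa-1;2)=0$ for all integers $M\ge1$ and $\kappa\ge1$ for which the symbol is defined, and the plan is to derive this vanishing from the oddness of $\arcsin$. The split into \eqref{1st-stirling-2k+1} and \eqref{3rd-comb-id} is dictated by the domain restriction $m+k\ne\alpha$ imposed in \eqref{Q(m-k)-sum-dfn}: when $M=1$ the would-be value $Q(1,1;2)$ lies exactly on the excluded locus $m+k=\alpha=2$, so the $M=1$ case can only start at second argument $2k+1\ge3$, giving \eqref{1st-stirling-2k+1}, whereas for $M=m+1\ge2$ there is no such obstruction and one obtains \eqref{3rd-comb-id}.

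First I would record the relevant symmetry of the generating function used in Theorem~\ref{arcsin-series-expansion-unify-thm}. Because $\arcsin(-x)=-\arcsin x$, the function $\te^{2a\arcsin x}$ is invariant under $(a,x)\mapsto(-a,-x)$; hence in its double Maclaurin series $\sum_{m,k}c_{m,k}a^mx^k$ each coefficient satisfies $c_{m,k}=(-1)^{m+k}c_{m,k}$, so that $c_{m,k}=0$ whenever $m+k$ is odd. Next I would pin down $c_{m,k}$: comparing $\te^{2a\arcsin x}=\sum_{m\ge0}\frac{(2\arcsin x)^m}{m!}a^m$ with the first line of \eqref{arcsin-Power-FIRST-im}, together with the reindexing $\ell\mapsto\ell-m$ that recasts the inner bracket there as $Q(m,k-m;2)$ according to \eqref{Q(m-k)-sum-dfn}, yields $c_{m,k}=\ti^{m}(-2\ti)^{k}Q(m,k-m;2)/k!$ for $k\ge m$. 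Since the prefactor $\ti^{m}(-2\ti)^{k}/k!$ never vanishes, the symmetry forces $Q(m,k-m;2)=0$ precisely when $k-m$ is odd. Writing $k-m=2\kappa-1$ and then specializing $m=1$ (where the constraint leaves $2\kappa-1\ge3$, i.e.\ of the form $2k+1\ge3$) recovers \eqref{1st-stirling-2k+1}, while replacing $m$ by $m+1$ recovers \eqref{3rd-comb-id}.

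The proof is short, so the difficulty is entirely bookkeeping rather than any deep step. One care point is verifying the reindexing claim that the bracketed sum in \eqref{arcsin-Power-FIRST-im} is literally $Q(m,k-m;2)$; this is a direct match against \eqref{Q(m-k)-sum-dfn} but must be checked index by index. The main obstacle, such as it is, will be keeping track of the constraint $m+k\ne\alpha$ so as to assert vanishing only where $Q$ is actually defined, which is exactly what separates the admissible ranges in \eqref{1st-stirling-2k+1} and \eqref{3rd-comb-id} and explains why $Q(1,1;2)$ is excluded rather than listed among the zeros. As a cross-check, and as an alternative not relying on the $(a,x)$ symmetry, I would note that the same conclusion can be read straight off the passage from the first to the second line in \eqref{arcsin-FIRST-im} and \eqref{arcsin-Power-FIRST-im}, where the reality of $\arcsin x$ against the imaginary factor $\ti^{k}$ annihilates exactly the odd-order terms; the symmetry formulation merely packages this cancellation uniformly in $m$.
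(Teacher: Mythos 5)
Your proof is correct, and its primary mechanism differs from the paper's --- which is, in fact, the argument you relegate to a cross-check at the end. The paper proves the corollary by observing that in \eqref{arcsin-FIRST-im}, \eqref{arcsin-square-FIRST-im}, and \eqref{arcsin-Power-FIRST-im} (and in \eqref{arcsin-im}, \eqref{arcsin-square-im}, \eqref{arcsin-Power-m-im} from the second proof) the coefficient of $\frac{x^k}{k!}$ equals $\ti^{m+k}(-2)^k$ times the real quantity $Q(m,k-m;2)$, while the left-hand side $\frac{(2\arcsin x)^m}{m!}$ is real; hence every term with $m+k$ odd is purely imaginary and must vanish, which is exactly what licenses passing from the first line to the second line of each of those displays. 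You instead invoke parity: invariance of $\te^{2a\arcsin x}$ under $(a,x)\mapsto(-a,-x)$ forces $c_{m,k}=0$ whenever $m+k$ is odd, and the identification $c_{m,k}=\ti^{m}(-2\ti)^{k}Q(m,k-m;2)/k!$, which you rightly verify via the reindexing $\ell\mapsto\ell-m$ against \eqref{Q(m-k)-sum-dfn}, converts this into the claimed vanishing. Your route buys uniformity in $m$ (one symmetry statement instead of six separate displays) and avoids any tracking of which powers of $\ti$ are real; its cost is that you must justify the existence and uniqueness of the double Maclaurin expansion of $\te^{2a\arcsin x}$ in $(a,x)$, which the paper sidesteps by reading the vanishing off one-variable identities it has already established. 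Two small bookkeeping points: \eqref{arcsin-Power-FIRST-im} is asserted only for $m\ge3$, so for the case $m=1$ needed in \eqref{1st-stirling-2k+1} (and for $m=2$) you should cite \eqref{arcsin-FIRST-im} and \eqref{arcsin-square-FIRST-im}, whose bracketed sums are $Q(1,k-1;2)$ and, after the same reindexing, $Q(2,k-2;2)$; and your handling of the excluded point $Q(1,1;2)$ on the locus $m+k=\alpha$ is precisely the right explanation of why the two identities carry asymmetric index ranges.
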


\begin{proof}
This follows from the disappearance of imaginary parts in~\eqref{arcsin-FIRST-im}, \eqref{arcsin-square-FIRST-im}, \eqref{arcsin-Power-FIRST-im}, \eqref{arcsin-im}, \eqref{arcsin-square-im}, and~\eqref{arcsin-Power-m-im} and from reformulation.
Corollary~\ref{arcsin-deriv-comb-id-cor} is thus proved.
\end{proof}

\section{Application to specific values of partial Bell polynomials}\label{sec-Bell-special-values}

The specific values of partial Bell polynomials $\bell_{2n,k}$ in~\eqref{special-values-Oertel} have been represented in~\cite[Sections~1 and~3]{AIMS-Math20210491.tex} by two explicit formulas for two cases $\bell_{2n,2k-1}$ and $\bell_{2n,2k}$ respectively. In this section, applying Maclaurin's series expansion~\eqref{arcsin-series-expansion-unify} in Theorem~\ref{arcsin-series-expansion-unify-thm}, we give a simpler, nicer, unified, and closed-form formula of $\bell_{2n,k}$ in~\eqref{special-values-Oertel} in terms of the quantity $Q(m,k;2)$ defined in~\eqref{Q(m-k)-sum-dfn}.

\begin{thm}\label{Bell-Oertel-closed-thm}
For $k,n\in\mathbb{N}$ such that $2n\ge k\in\mathbb{N}$, we have
\begin{multline}\label{Bell-Oertel-closed-Eq}
\bell_{2n,k}\biggl(0,\frac{1}{3},0,\frac{9}{5},0,\frac{225}{7},\dotsc, \frac{1+(-1)^{k+1}}{2}\frac{[(2n-k)!!]^2}{2n-k+2}\biggr)\\*
=(-1)^{n+k}\frac{(4n)!!}{(2n+k)!}\sum_{q=1}^{k}(-1)^{q}\binom{2n+k}{k-q} Q(q,2n;2),
\end{multline}
where $Q(q,2n;2)$ is given by~\eqref{Q(m-k)-sum-dfn}.
\end{thm}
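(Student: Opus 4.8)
The plan is to recognize the prescribed arguments of the partial Bell polynomial as the Maclaurin coefficients of the single function $\frac{\arcsin t}{t}-1$, and then to run everything through the defining exponential generating function for $\bell_{N,k}$ together with Theorem~\ref{arcsin-series-expansion-unify-thm}. First I would observe that the listed sequence $0,\frac13,0,\frac95,0,\frac{225}7,\dotsc$ is precisely $x_i=i!\,[t^i]\bigl(\frac{\arcsin t}{t}-1\bigr)$, where $[t^i]$ denotes the coefficient of $t^i$. Indeed, $\frac{\arcsin t}{t}=\sum_{j\ge0}\frac{\binom{2j}{j}}{4^j(2j+1)}t^{2j}$ is even, so all odd-index arguments vanish, while using $(2j-1)!!=\frac{(2j)!}{2^jj!}$ one checks that $(2j)!\,[t^{2j}]\frac{\arcsin t}{t}=\frac{[(2j)!]^2}{4^j(j!)^2(2j+1)}=\frac{[(2j-1)!!]^2}{2j+1}$, which is exactly the even-index argument $x_{2j}$. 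The factor $\frac{1+(-1)^{k+1}}2$ attached to the final argument $x_{2n-k+1}$ is merely the indicator that $2n-k+1$ is even.

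With this identification in hand, the defining generating function of the partial Bell polynomials, from \cite[p.~134, Theorem~A]{Comtet-Combinatorics-74},
\[
\frac1{k!}\Biggl(\sum_{m\ge1}x_m\frac{t^m}{m!}\Biggr)^k=\sum_{N\ge k}\bell_{N,k}(x_1,x_2,\dotsc)\frac{t^N}{N!},
\]
collapses to $\frac1{k!}\bigl(\frac{\arcsin t}{t}-1\bigr)^k=\sum_{N\ge k}\bell_{N,k}\frac{t^N}{N!}$. Extracting the coefficient of $t^{2n}$ on both sides then gives $\bell_{2n,k}=\frac{(2n)!}{k!}\,[t^{2n}]\bigl(\frac{\arcsin t}{t}-1\bigr)^k$.

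Next I would expand $\bigl(\frac{\arcsin t}{t}-1\bigr)^k=\sum_{q=0}^k\binom kq(-1)^{k-q}\bigl(\frac{\arcsin t}{t}\bigr)^q$ by the binomial theorem and feed in Theorem~\ref{arcsin-series-expansion-unify-thm}. Since $n\ge1$, the $q=0$ term contributes nothing to the $t^{2n}$ coefficient, and~\eqref{arcsin-series-expansion-unify} yields $[t^{2n}]\bigl(\frac{\arcsin t}{t}\bigr)^q=(-1)^n\frac{Q(q,2n;2)}{\binom{q+2n}q}\frac{2^{2n}}{(2n)!}$ for each $q\ge1$. Collecting the terms produces
\[
\bell_{2n,k}=\frac{(-1)^n2^{2n}}{k!}\sum_{q=1}^k(-1)^{k-q}\binom kq\frac{Q(q,2n;2)}{\binom{q+2n}q}.
\]

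The remainder is routine bookkeeping. Using $(-1)^{k-q}=(-1)^{k+q}$, the identity $(4n)!!=2^{2n}(2n)!$, and the elementary equality $\frac1{k!}\binom kq\frac1{\binom{q+2n}q}=\frac{(2n)!}{(2n+k)!}\binom{2n+k}{k-q}$ (both sides reduce to $\frac{(2n)!}{(k-q)!\,(2n+q)!}$), I would rewrite the last display as the claimed closed form $(-1)^{n+k}\frac{(4n)!!}{(2n+k)!}\sum_{q=1}^k(-1)^q\binom{2n+k}{k-q}Q(q,2n;2)$. The only genuinely delicate step is the first one: correctly recognizing the generating function $\frac{\arcsin t}{t}-1$ behind the prescribed arguments and confirming the parity accounting in the last argument. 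Once that is settled, the entire conclusion follows mechanically from Theorem~\ref{arcsin-series-expansion-unify-thm} and the Bell-polynomial generating function.
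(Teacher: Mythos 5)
Your proof is correct, and it is a genuinely leaner route than the paper's, even though both share the same backbone (Bell generating function, binomial expansion of $\bigl(\frac{\arcsin t}{t}-1\bigr)^k$, and Theorem~\ref{arcsin-series-expansion-unify-thm} for the coefficients of $\bigl(\frac{\arcsin t}{t}\bigr)^q$). The difference lies in how the prescribed arguments are identified. The paper reads them as $\frac{(\arcsin t)^{(i+1)}|_{t=0}}{i+1}$, i.e.\ derivatives of $\arcsin t$ divided by their order, and must therefore invoke the Comtet index-shift formula
\begin{equation*}
\bell_{n,k}\biggl(\frac{x_2}{2},\frac{x_3}{3},\dotsc,\frac{x_{n-k+2}}{n-k+2}\biggr)
=\frac{n!}{(n+k)!}\bell_{n+k,k}(0,x_2,x_3,\dotsc,x_{n+1})
\end{equation*}
to pass to $\bell_{2n+k,k}$, after which it represents that shifted polynomial as a $2n$-th derivative limit of $\bigl(\frac{\arcsin t-t}{t}\bigr)^k$ and only then expands binomially. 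You instead observe at the outset that $\frac{(\arcsin t)^{(i+1)}|_{t=0}}{i+1}=i!\,[t^i]\bigl(\frac{\arcsin t}{t}-1\bigr)$, so the arguments are exactly the scaled Maclaurin coefficients of $\frac{\arcsin t}{t}-1$; the defining generating function then gives $\bell_{2n,k}=\frac{(2n)!}{k!}[t^{2n}]\bigl(\frac{\arcsin t}{t}-1\bigr)^k$ in one stroke, with no index shift and no derivative-limit bookkeeping. (Note, as you implicitly use, that $[t^{2n}]$ of the $k$-th power depends only on $x_1,\dotsc,x_{2n-k+1}$, so evaluating the full series is legitimate.) Your final reductions --- $(4n)!!=2^{2n}(2n)!$ and $\frac{1}{k!}\binom{k}{q}\frac{1}{\binom{2n+q}{q}}=\frac{(2n)!}{(2n+k)!}\binom{2n+k}{k-q}$ --- are exactly the paper's closing computation. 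What your version buys is the elimination of one auxiliary lemma and a cleaner conceptual statement (Bell polynomial of Taylor coefficients equals Taylor coefficient of a power); what the paper's version buys is an explicit display of the shifted polynomial $\bell_{2n+k,k}\bigl(0,(\arcsin t)''|_{t=0},\dotsc\bigr)$, which it reuses as an intermediate object of independent interest.
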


\begin{proof}
It is well known that the power series expansion
\begin{equation*}
\arcsin t=\sum_{\ell=0}^{\infty}[(2\ell-1)!!]^2\frac{t^{2\ell+1}}{(2\ell+1)!},\quad |t|<1
\end{equation*}
is valid, where $(-1)!!=1$.
This implies that
\begin{gather*}
\bell_{2n,k}\biggl(0,\frac{1}{3},0,\frac{9}{5},0,\frac{225}{7},\dotsc, \frac{1+(-1)^{k+1}}{2}\frac{[(2n-k)!!]^2}{2n-k+2}\biggr)\\
=\bell_{2n,k}\biggl(\frac{(\arcsin t)''|_{t=0}}{2}, \frac{(\arcsin t)'''|_{t=0}}{3}, \frac{(\arcsin t)^{(4)}|_{t=0}}{4},\dotsc, \frac{(\arcsin t)^{(2n-k+2)}|_{t=0}}{2n-k+2}\biggr).
\end{gather*}
Employing the formula
\begin{equation*}
\bell_{n,k}\biggl(\frac{x_2}{2},\frac{x_3}{3},\dotsc,\frac{x_{n-k+2}}{n-k+2}\biggr)
=\frac{n!}{(n+k)!}\bell_{n+k,k}(0,x_2,x_3,\dotsc,x_{n+1})
\end{equation*}
in~\cite[p.~136]{Comtet-Combinatorics-74}, we acquire
\begin{gather*}
\bell_{2n,k}\biggl(0,\frac{1}{3},0,\frac{9}{5},0,\frac{225}{7},\dotsc, \frac{1+(-1)^{k+1}}{2}\frac{[(2n-k)!!]^2}{2n-k+2}\biggr)\\
=\frac{(2n)!}{(2n+k)!}\bell_{2n+k,k}\bigl(0,(\arcsin t)''|_{t=0},(\arcsin t)'''|_{t=0},\dotsc,(\arcsin t)^{(2n+1)}|_{t=0}\bigr).
\end{gather*}
\par
Making use of the formula
\begin{equation*}
\frac1{k!}\Biggl(\sum_{m=1}^\infty x_m\frac{t^m}{m!}\Biggr)^k =\sum_{n=k}^\infty \bell_{n,k}(x_1,x_2,\dotsc,x_{n-k+1})\frac{t^n}{n!}
\end{equation*}
for $k\in\mathbb{N}_0$ in~\cite[p.~133]{Comtet-Combinatorics-74} yields
\begin{align*}
\sum_{n=0}^\infty \bell_{n+k,k}(x_1,x_2,\dotsc,x_{n+1})\frac{k!n!}{(n+k)!}\frac{t^{n+k}}{n!}
&=\Biggl(\sum_{m=1}^\infty x_m\frac{t^m}{m!}\Biggr)^k,\\
\sum_{n=0}^\infty \frac{\bell_{n+k,k}(x_1,x_2,\dotsc,x_{n+1})}{\binom{n+k}{k}}\frac{t^{n+k}}{n!}
&=\Biggl(\sum_{m=1}^\infty x_m\frac{t^m}{m!}\Biggr)^k,\\
\bell_{n+k,k}(x_1,x_2,\dotsc,x_{n+1})
&=\binom{n+k}{k}\lim_{t\to0}\frac{\td^n}{\td t^n}\Biggl[\sum_{m=0}^\infty x_{m+1}\frac{t^{m}}{(m+1)!}\Biggr]^k,\\
\bell_{2n+k,k}(x_1,x_2,\dotsc,x_{2n+1})
&=\binom{2n+k}{k}\lim_{t\to0}\frac{\td^{2n}}{\td t^{2n}}\Biggl[\sum_{m=0}^\infty x_{m+1}\frac{t^{m}}{(m+1)!}\Biggr]^k.
\end{align*}
Setting $x_1=0$ and $x_m=(\arcsin t)^{(m)}|_{t=0}$ for $m\ge2$ gives
\begin{align*}
&\quad\bell_{2n+k,k}\bigl(0,(\arcsin t)''|_{t=0},(\arcsin t)'''|_{t=0},\dotsc,(\arcsin t)^{(2n+1)}|_{t=0}\bigr)\\
&=\binom{2n+k}{k}\frac{\td^{2n}}{\td t^{2n}}\Biggl[\frac{1}{t} \sum_{m=2}^\infty (\arcsin t)^{(m)}|_{t=0} \frac{t^m}{m!}\Biggr]^k\\
&=\binom{2n+k}{k}\frac{\td^{2n}}{\td t^{2n}}\biggl(\frac{\arcsin t-t}{t}\biggr)^k\\
&=\binom{2n+k}{k}\frac{\td^{2n}}{\td t^{2n}}\sum_{q=0}^{k}(-1)^{k-q}\binom{k}{q}\biggl(\frac{\arcsin t}{t}\biggr)^q\\
&=\binom{2n+k}{k}\sum_{q=1}^{k}(-1)^{k-q}\binom{k}{q}\frac{\td^{2n}}{\td t^{2n}}\biggl(\frac{\arcsin t}{t}\biggr)^q.
\end{align*}
By virtue of the series expansion~\eqref{arcsin-series-expansion-unify} in Theorem~\ref{arcsin-series-expansion-unify-thm}, we obtain
\begin{equation*}
\lim_{t\to0}\frac{\td^{2n}}{\td t^{2n}}\biggl(\frac{\arcsin t}{t}\biggr)^q
=(-1)^n\frac{2^{2n}}{\binom{2n+q}{q}} Q(q,2n;2)
\end{equation*}
for $n\ge q\in\mathbb{N}$. In conclusion, we arrive at
\begin{align*}
&\quad\bell_{2n,k}\biggl(0,\frac{1}{3},0,\frac{9}{5},0,\frac{225}{7},\dotsc, \frac{1+(-1)^{k+1}}{2}\frac{[(2n-k)!!]^2}{2n-k+2}\biggr)\\
&=\frac{(-1)^n(2n)!}{(2n+k)!}\binom{2n+k}{k}\sum_{q=1}^{k}\binom{k}{q} \frac{(-1)^{k-q}2^{2n}}{\binom{2n+q}{q}} Q(q,2n;2)\\
&=(-1)^{n+k}\frac{(4n)!!}{(2n+k)!}\sum_{q=1}^{k}(-1)^{q}\binom{2n+k}{k-q} Q(q,2n;2).
\end{align*}
The proof of Theorem~\ref{Bell-Oertel-closed-thm} is complete.
\end{proof}

\section{Application to series representation of generalized logsine function}\label{sec-series-representation-logsine}

In~\cite[Section~4 and Remark~5.7]{AIMS-Math20210491.tex}, two series representations for generalized logsine function $\ls_j^{(k)}(\theta)$ defined by~\eqref{generalized-logsine-dfn} were established by two cases $\ls_j^{(2\ell-1)}(\theta)$ and $\ls_j^{(2\ell)}(\theta)$ for $\ell\in\mathbb{N}$ respectively. In this section, applying Maclaurin's series expansion~\eqref{arcsin-series-expansion-unify} in Theorem~\ref{arcsin-series-expansion-unify-thm}, we derive a simpler and unified series representation of $\ls_j^{(k)}(\theta)$ for $k\in\mathbb{N}$ in terms of the quantity $Q(m,k;2)$ defined in~\eqref{Q(m-k)-sum-dfn}.

\begin{thm}\label{logsine-series-expnsion-thm}
In the region $0<\theta\le\pi$ and for $j,k\in\mathbb{N}$, generalized logsine function $\ls_j^{(k)}(\theta)$ has the series representation
\begin{equation}\label{logsine-series-expnsion-represent}
\begin{aligned}
\ls_j^{(k)}(\theta)
&=(\ln2)^{j}\biggl(\frac{2\sin\frac{\theta}{2}}{\ln2}\biggr)^{k+1} \Biggl[k!\sum_{q=1}^{\infty} \frac{(-1)^{q+1}}{(k+2q)!}\biggl(2\sin\frac{\theta}{2}\biggr)^{2q}Q(k+1,2q;2)\\
&\quad\times\sum_{\ell=0}^{j-k-1}\binom{j-k-1}{\ell}\biggl(\frac{\ln\sin\frac{\theta}{2}}{\ln2}\biggr)^{\ell} \sum_{p=0}^{\ell}\frac{(-1)^p\langle\ell\rangle_{p}} {(k+2q+1)^{p+1}\bigl(\ln\sin\frac{\theta}{2}\bigr)^{p}}\\
&\quad-\sum_{\ell=0}^{j-k-1}\binom{j-k-1}{\ell} \biggl(\frac{\ln\sin\frac{\theta}{2}}{\ln2}\biggr)^{\ell} \sum_{p=0}^{\ell} \frac{(-1)^p\langle\ell\rangle_{p}}{(k+1)^{p+1}\bigl(\ln\sin\frac{\theta}{2}\bigr)^{p}}\Biggr],
\end{aligned}
\end{equation}
where the falling factorial $\langle z\rangle_k$ is defined by~\eqref{Fall-Factorial-Dfn-Eq} and $Q(k+1,2q;2)$ is defined by~\eqref{Q(m-k)-sum-dfn}.
\end{thm}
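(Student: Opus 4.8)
The plan is to turn the defining integral~\eqref{generalized-logsine-dfn} into the Maclaurin expansion~\eqref{arcsin-diiff-series} of Corollary~\ref{arcsin-diiff-series-cor} by a single substitution, and then to integrate the resulting power series term by term against a power of a logarithm. First I would substitute $t=\sin\frac{x}{2}$, so that $x=2\arcsin t$, $\td x=\frac{2}{\sqrt{1-t^2}\,}\td t$, and, since $0<\theta\le\pi$ keeps $\sin\frac{x}{2}>0$ on the range of integration, $\ln\bigl|2\sin\frac{x}{2}\bigr|=\ln(2t)$. Writing $s=\sin\frac\theta2$, this recasts~\eqref{generalized-logsine-dfn} as
\[
\ls_j^{(k)}(\theta)=-2^{k+1}\int_0^{s}\frac{(\arcsin t)^k}{\sqrt{1-t^2}\,}\,[\ln(2t)]^{j-k-1}\,\td t .
\]
Into this I would insert the expansion~\eqref{arcsin-diiff-series} for $\frac{(\arcsin t)^k}{\sqrt{1-t^2}\,}$ (with the summation index renamed to $q$) and expand the logarithmic factor binomially as $[\ln(2t)]^{j-k-1}=\sum_{\ell=0}^{j-k-1}\binom{j-k-1}{\ell}(\ln2)^{j-k-1-\ell}(\ln t)^{\ell}$. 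After interchanging summation and integration, every contribution reduces to an elementary integral $\int_0^{s}t^{n}(\ln t)^{\ell}\,\td t$, with $n=k$ coming from the constant term of~\eqref{arcsin-diiff-series} and $n=k+2q$ coming from its $q$-th term.

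Next I would evaluate these integrals through the antiderivative $\int v^{\ell}\te^{av}\,\td v=\te^{av}\sum_{p=0}^{\ell}\frac{(-1)^p\langle\ell\rangle_p}{a^{p+1}}v^{\ell-p}$ (put $v=\ln t$ and $a=n+1$), which yields
\[
\int_0^{s}t^{n}(\ln t)^{\ell}\,\td t=s^{n+1}(\ln s)^{\ell}\sum_{p=0}^{\ell}\frac{(-1)^p\langle\ell\rangle_p}{(n+1)^{p+1}(\ln s)^{p}} ,
\]
the lower endpoint contributing nothing because $t^{n+1}$ overwhelms every power of $\ln t$ as $t\to0^{+}$. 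Substituting $n+1=k+1$ reproduces, after pulling out the common factor $(\ln2)^{j-k-1}(2s)^{k+1}=(\ln2)^{j}\bigl(\frac{2s}{\ln2}\bigr)^{k+1}$, the subtracted bracket of~\eqref{logsine-series-expnsion-represent}, while $n+1=k+2q+1$ reproduces the first bracket; here the coefficient simplifies via $\frac1{\binom{k+2q}{k}(2q)!}=\frac{k!}{(k+2q)!}$, which accounts for the prefactor $k!$, and the outer sign $-(-1)^q$ becomes $(-1)^{q+1}$.

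The step requiring genuine care, rather than bookkeeping, is the justification of the term-by-term integration. The series~\eqref{arcsin-diiff-series} converges for $|t|<1$, so the interchange of sum and integral is immediate for $0<\theta<\pi$, where $s<1$ and the convergence is uniform on $[0,s]$; the endpoint $\theta=\pi$ (where $s=1$ sits on the circle of convergence and $\ln s=0$ makes the written negative powers of $\ln\sin\frac\theta2$ merely formal) must be treated by recombining $(\ln s)^{\ell}(\ln s)^{-p}=(\ln s)^{\ell-p}$ and passing to the limit. The remaining work—tracking the powers of $2$, the binomial reindexing, and the cancellation producing $\frac{k!}{(k+2q)!}$—is routine.
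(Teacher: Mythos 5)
Your proposal is correct and takes essentially the same route as the paper's own proof: the paper starts from the integral representation \eqref{logsine-Diff-Part-Two} (quoted from Davydychev--Kalmykov, which you rederive by the substitution $t=\sin\frac{x}{2}$), inserts the expansion \eqref{arcsin-diiff-series} of Corollary~\ref{arcsin-diiff-series-cor}, expands $\ln^{j-k-1}(2x)$ binomially, and evaluates $\int_0^{\sin(\theta/2)}x^n(\ln x)^\ell\,\td x$ by the same antiderivative formula, with the identical simplification $\frac{1}{\binom{k+2q}{k}(2q)!}=\frac{k!}{(k+2q)!}$. Your explicit justification of the term-by-term integration and of the endpoint case $\theta=\pi$ is extra care that the paper leaves implicit.
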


\begin{proof}
In~\cite[p.~308]{Davydychev-Kalmykov-2001}, it was derived that
\begin{equation}\label{logsine-Diff-Part-Two}
\ls_j^{(k)}(\theta)=-2^{k+1}\int_{0}^{\sin(\theta/2)}\frac{(\arcsin x)^k}{\sqrt{1-x^2}\,}\ln^{j-k-1}(2x)\td x
\end{equation}
for $0<\theta\le\pi$ and $j\ge k+1\in\mathbb{N}$. Applying the series expansion~\eqref{arcsin-diiff-series} in Corollary~\ref{arcsin-diiff-series-cor} to~\eqref{logsine-Diff-Part-Two} gives
\begin{align*}
\ls_j^{(k)}(\theta)&=\sum_{q=1}^{\infty} \frac{(-1)^{q+1}}{{(2q)!}} \frac{2^{2q+k+1}}{\binom{2q+k}{k}}Q(k+1,2q;2) \int_{0}^{\sin(\theta/2)}x^{2q+k}\ln^{j-k-1}(2x)\td x\\
&\quad-2^{k+1}\int_{0}^{\sin(\theta/2)}x^k\ln^{j-k-1}(2x)\td x.
\end{align*}
Making use of the formula
\begin{equation*}
\int x^n\ln^mx\td x=x^{n+1}\sum_{p=0}^{m}(-1)^p\langle m\rangle_{p}\frac{\ln^{m-p}x}{(n+1)^{p+1}}, \quad m,n\in\mathbb{N}_0
\end{equation*}
in~\cite[p.~238, 2.722]{Gradshteyn-Ryzhik-Table-8th} results in
\begin{gather*}
\int_{0}^{\sin(\theta/2)}x^{2q+k}\ln^{j-k-1}(2x)\td x
=\int_{0}^{\sin(\theta/2)}x^{2q+k}(\ln2+\ln x)^{j-k-1}\td x\\
=\int_{0}^{\sin(\theta/2)}x^{2q+k}\sum_{\ell=0}^{j-k-1}\binom{j-k-1}{\ell}(\ln2)^{j-k-\ell-1}(\ln x)^\ell\td x\\
=\sum_{\ell=0}^{j-k-1}\binom{j-k-1}{\ell}(\ln2)^{j-k-\ell-1}\int_{0}^{\sin(\theta/2)}x^{2q+k}(\ln x)^\ell\td x\\
=\sum_{\ell=0}^{j-k-1}\binom{j-k-1}{\ell}(\ln2)^{j-k-\ell-1} \biggl(\sin\frac{\theta}{2}\biggr)^{2q+k+1}\sum_{p=0}^{\ell}(-1)^p\langle \ell\rangle_{p}\frac{\bigl(\ln\sin\frac{\theta}{2}\bigr)^{\ell-p}}{(2q+k+1)^{p+1}}
\end{gather*}
and
\begin{equation*}
\int_{0}^{\sin(\theta/2)}x^k\ln^{j-k-1}(2x)\td x
=\sum_{\ell=0}^{j-k-1}\binom{j-k-1}{\ell}(\ln2)^{j-\ell} \biggl(\frac{\sin\frac{\theta}{2}}{\ln2}\biggr)^{k+1}\sum_{p=0}^{\ell}(-1)^p\langle \ell\rangle_{p}\frac{\bigl(\ln\sin\frac{\theta}{2}\bigr)^{\ell-p}}{(k+1)^{p+1}}.
\end{equation*}
Consequently, it follows that
\begin{align*}
\ls_j^{(k)}(\theta)&=\sum_{q=1}^{\infty} \frac{(-1)^{q+1}}{{(2q)!}} \frac{2^{2q+k+1}}{\binom{2q+k}{k}}Q(k+1,2q;2)\\
&\quad\times\sum_{\ell=0}^{j-k-1}\binom{j-k-1}{\ell}(\ln2)^{j-\ell} \biggl(\frac{\sin\frac{\theta}{2}}{\ln2}\biggr)^{2q+k+1}\sum_{p=0}^{\ell}(-1)^p\langle \ell\rangle_{p}\frac{\bigl(\ln\sin\frac{\theta}{2}\bigr)^{\ell-p}}{(2q+k+1)^{p+1}}\\
&\quad-2^{k+1}\sum_{\ell=0}^{j-k-1}\binom{j-k-1}{\ell}(\ln2)^{j-\ell} \biggl(\frac{\sin\frac{\theta}{2}}{\ln2}\biggr)^{k+1}\sum_{p=0}^{\ell}(-1)^p\langle \ell\rangle_{p}\frac{\bigl(\ln\sin\frac{\theta}{2}\bigr)^{\ell-p}}{(k+1)^{p+1}}\\
&=(\ln2)^{j}k!\biggl(\frac{2\sin\frac{\theta}{2}}{\ln2}\biggr)^{k+1} \sum_{q=1}^{\infty} \frac{(-1)^{q+1}}{(2q+k)!}\biggl(2\sin\frac{\theta}{2}\biggr)^{2q}Q(k+1,2q;2)\\
&\quad\times\sum_{\ell=0}^{j-k-1}\binom{j-k-1}{\ell}\biggl(\frac{\ln\sin\frac{\theta}{2}}{\ln2}\biggr)^{\ell} \sum_{p=0}^{\ell}\frac{(-1)^p\langle\ell\rangle_{p}} {(2q+k+1)^{p+1}\bigl(\ln\sin\frac{\theta}{2}\bigr)^{p}}\\
&\quad-(\ln2)^{j}\biggl(\frac{2\sin\frac{\theta}{2}}{\ln2}\biggr)^{k+1} \sum_{\ell=0}^{j-k-1}\binom{j-k-1}{\ell} \biggl(\frac{\ln\sin\frac{\theta}{2}}{\ln2}\biggr)^{\ell} \sum_{p=0}^{\ell} \frac{(-1)^p\langle\ell\rangle_{p}}{(k+1)^{p+1}\bigl(\ln\sin\frac{\theta}{2}\bigr)^{p}}.
\end{align*}
The proof of Theorem~\ref{logsine-series-expnsion-thm} is complete.
\end{proof}

\section{Maclaurin's series expansions for positive integer powers of inverse hyperbolic sine function and for incomplete gamma function}\label{sec-hyperbolic-sine-id}

In this section, by similar methods and arguments used in Section~\ref{sec-arcsin-series-expan}, we discover Maclaurin's series expansions of the function $\bigl(\frac{\arcsinh t}{t}\bigr)^m$ for $m\in\mathbb{N}$, of the function $\Gamma(m,\arcsinh t)$ for $m\ge2$, and of the exponential function $\te^{\arcsinh t}$, while we present three series identities involving $(\arcsinh t)^\ell$ for $\ell\ge2$ and the quantity $Q(m,k;\alpha)$ for $\alpha=2,3$.

\begin{thm}\label{arcsinh-identity-thm}
For $m\in\mathbb{N}$ and $|t|<\infty$, the function $\bigl(\frac{\arcsinh t}{t}\bigr)^{m}$, whose value at $t=0$ is defined to be $1$, has Maclaurin's series expansion
\begin{equation}\label{arcsinh-series-expansion}
\biggl(\frac{\arcsinh t}{t}\biggr)^m
=1+\sum_{k=1}^{\infty}\frac{Q(m,2k;2)}{\binom{m+2k}{m}} \frac{(2t)^{2k}}{(2k)!},
\end{equation}
where $Q(m,2k;2)$ is defined by~\eqref{Q(m-k)-sum-dfn}.
\par
The inverse hyperbolic sine function $\arcsinh t$ satisfies the series identities
\begin{align}\label{arcsinh-series-id-1}
\sum_{\ell=0}^{\infty}(-1)^{\ell}(\ell+1)\frac{(\arcsinh t)^{\ell+2}}{(\ell+2)!}
&=\frac{1}{2}t^2-\frac{1}{3}t^3+\frac{1}{4}\sum_{k=3}^{\infty} Q(2,k-1;3)\frac{(2t)^{k+1}}{(k+1)!},\\
\sum_{\ell=0}^{\infty}(-1)^{\ell}(\ell+1)(\ell+2)\frac{(\arcsinh t)^{\ell+3}}{(\ell+3)!} &=\frac{1}{3}t^3+\frac{1}{8}\sum_{k=3}^{\infty}Q(3,k-2;3)\frac{(2t)^{k+1}}{(k+1)!},
\label{arcsinh-series-id-2}
\end{align}
and, for $m\ge3$,
\begin{equation}\label{arcsinh-series-id-3}
\sum_{\ell=0}^{\infty}(-1)^{\ell}\binom{\ell+m}{m}\frac{(\arcsinh t)^{\ell+m+1}}{(\ell+m+1)!} =\frac{1}{2^{m+1}}\sum_{k=m}^{\infty}Q(m+1,k-m;3) \frac{(2t)^{k+1}}{(k+1)!},
\end{equation}
where $Q(m,k;3)$ is defined by~\eqref{Q(m-k)-sum-dfn}.
\par
The exponential function $\te^{\arcsinh t}$ of the inverse hyperbolic sine function $\arcsinh t$ has Maclaurin's series expansion
\begin{equation}\label{exp-arcsinh-sereis-exapnsion}
\te^{\arcsinh t}=1+t-t^2\sum_{k=0}^{\infty}\binom{\frac{2k-1}{2}}{2k+1}\frac{(2t)^{2k}}{k+1},
\end{equation}
where extended binomial coefficient $\binom{z}{w}$ is defined by~\eqref{Gen-Coeff-Binom}.
\end{thm}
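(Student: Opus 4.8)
The unifying device throughout will be the elementary relation $\arcsin(\ti t)=\ti\arcsinh t$, which turns every trigonometric result of Section~\ref{sec-arcsin-series-expan} into a hyperbolic one. For the expansion~\eqref{arcsinh-series-expansion} I would simply put $t\to\ti t$ in~\eqref{arcsin-series-expansion-unify}: the left-hand side becomes $\bigl(\frac{\arcsin(\ti t)}{\ti t}\bigr)^m=\bigl(\frac{\arcsinh t}{t}\bigr)^m$, while on the right $(2\ti t)^{2k}=(-1)^k(2t)^{2k}$ cancels the factor $(-1)^k$, producing the all-positive series~\eqref{arcsinh-series-expansion} with the \emph{same} coefficients $Q(m,2k;2)$. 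No fresh computation is required here.

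For the three identities~\eqref{arcsinh-series-id-1}--\eqref{arcsinh-series-id-3} I would imitate the second proof of Theorem~\ref{arcsin-series-expansion-unify-thm}, but starting from the hyperbolic form of~\eqref{JO(833)-x2(2x)}. Substituting $x\to\ti t$ in~\eqref{JO(833)-x2(2x)} and setting $\ti a=b$ gives $\frac{\te^{2b\arcsinh t}}{\sqrt{1+t^2}\,}=\sum_{k=0}^\infty 2^k\binom{b+\frac{k-1}{2}}{k}t^k$; the crucial step is then to replace $b$ by $b-\frac12$, i.e.\ to carry the \emph{shifted} exponent $(2b-1)\arcsinh t$, which changes the constant part of the upper binomial entry from $\frac{k-1}{2}$ to $\frac{k-2}{2}$. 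Expanding $\binom{b+\frac{k-2}{2}}{k}$ by~\eqref{(12.1)-Quaintance} and reading off the coefficient of $b^m$, the inner Stirling sum $\frac1{k!}\sum_{j\ge m}\binom{j}{m}s(k,j)\bigl(\frac{k-2}{2}\bigr)^{j-m}$ is exactly $\frac{1}{k!}Q(m+1,k-m;3)$; integrating $\te^{(2b-1)\arcsinh t}/\sqrt{1+t^2}\,$ term by term over $(0,t)$ then yields the right-hand sides. For the left-hand sides I would expand the same integrand directly, whose coefficient of $b^m$ is $\frac{2^m(\arcsinh s)^m\te^{-\arcsinh s}}{m!\sqrt{1+s^2}\,}$; the substitution $v=\arcsinh s$ turns its integral into $\frac{2^m}{m!}\sum_{\ell\ge0}\frac{(-1)^\ell}{\ell!}\frac{(\arcsinh t)^{m+\ell+1}}{m+\ell+1}$, which equals $2^m$ times the stated $\ell$-sums because $\binom{m+\ell}{m}\frac{1}{(m+\ell+1)!}=\frac{1}{m!\,\ell!\,(m+\ell+1)}$. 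Equating the two coefficients of $b^m$ and cancelling $2^m$ produces~\eqref{arcsinh-series-id-3}; the cases $m=1,2$ are~\eqref{arcsinh-series-id-1} and~\eqref{arcsinh-series-id-2} with the first one or two terms ($k=m,m+1$) displayed separately.

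The expansion~\eqref{exp-arcsinh-sereis-exapnsion} I would obtain at once from $\te^{\arcsinh t}=t+\sqrt{1+t^2}\,$ by expanding $\sqrt{1+t^2}=\sum_{n\ge0}\binom{1/2}{n}t^{2n}$ and rewriting the coefficient of $t^{2k+2}$ through the half-integer identity $\binom{1/2}{k+1}=-\frac{2^{2k}}{k+1}\binom{\frac{2k-1}{2}}{2k+1}$, a direct consequence of~\eqref{Gen-Coeff-Binom} and the duplication formula for $\Gamma$. The only real obstacle is locating the correct generating function in the middle step: one must recognize that the extra factor $\te^{-\arcsinh t}=\sqrt{1+t^2}-t$ (equivalently, the shift $2b\mapsto 2b-1$ in the exponent) is what simultaneously supplies the alternating weights $(-1)^\ell$ on the left and moves $\alpha$ from $2$ to $3$ on the right. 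Once this shift is in place, everything else is the same Stirling-number bookkeeping and term-by-term integration already carried out in Section~\ref{sec-arcsin-series-expan}, so I expect no further difficulty.
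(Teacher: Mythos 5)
Your proposal is correct, and it reaches the theorem by a route that coincides with the paper's in the middle part and genuinely departs from it at both ends. For~\eqref{arcsinh-series-expansion} the paper does \emph{not} substitute $t\mapsto\ti t$ into~\eqref{arcsin-series-expansion-unify}; it rederives the expansion from Hansen's hyperbolic formulas~\eqref{(10.49.32)-first} and~\eqref{(10.49.35)-first}, although a remark in Section~\ref{sec-power-remarks} sanctions exactly your shortcut by observing that $\arcsinh t=-\ti\arcsin(\ti t)$ makes the two expansions equivalent; your route only needs the one-line remark that the real identity of Theorem~\ref{arcsin-series-expansion-unify-thm} extends to complex arguments in the unit disk by analyticity. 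For the identities~\eqref{arcsinh-series-id-1}--\eqref{arcsinh-series-id-3} your argument is essentially the paper's: the generating function you obtain by putting $x=\ti t$, $b=\ti a$ in~\eqref{JO(833)-x2(2x)} and then shifting $b\mapsto b-\frac12$ is precisely the paper's~\eqref{(10.49.35)-first-replace}, and the expansion of $\binom{b+\frac{k-2}{2}}{k}$ through~\eqref{(12.1)-Quaintance} into $\frac{1}{k!}Q(m+1,k-m;3)$ is identical; the only real difference is on the left-hand side, where you substitute $v=\arcsinh s$ and integrate $v^m\te^{-v}$ termwise, while the paper expands $(2a-1)^{m-1}$ by the binomial theorem --- both give the same $\ell$-sums, and your version is arguably cleaner. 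For~\eqref{exp-arcsinh-sereis-exapnsion} your derivation is genuinely different and more elementary: $\te^{\arcsinh t}=t+\sqrt{1+t^2}\,$ together with the binomial series and the (correct) half-integer identity $\binom{1/2}{k+1}=-\frac{2^{2k}}{k+1}\binom{(2k-1)/2}{2k+1}$ gives the expansion at once, whereas the paper extracts the $m=0$ coefficient identity, proves $Q(1,2k+1;3)=(2k+1)!\binom{(2k-1)/2}{2k+1}$ and $Q(1,2k+2;3)=0$, i.e.~\eqref{Comb-ID-Q(3)-One} and~\eqref{Comb-ID-Q(3)-Two}, and then reflects $t\mapsto-t$. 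What the paper's longer route buys is exactly those two combinatorial identities, which it records as possibly new and reuses in its remarks; what your route buys is brevity and the transparency of the closed form. One incidental payoff of your computation: for $m=2$ it yields the coefficient $\frac14$ in front of the infinite sum (in agreement with the paper's own proof, whose displayed identity carries $\frac{1}{2^3}$ against the left-hand side $\binom{\ell+2}{2}$), so the coefficient $\frac18$ printed in~\eqref{arcsinh-series-id-2}, whose left-hand side is $(\ell+1)(\ell+2)=2\binom{\ell+2}{2}$, is a factor-of-two misprint that your derivation exposes.
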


\begin{proof}
In~\cite[pp.~210--211, (10.49.32) and~(10.49.35)]{Hansen-B-1975}, there are the formulas
\begin{equation}\label{(10.49.32)-first}
\sum_{k=0}^{\infty}\frac{(a)_{k/2}}{(a+1)_{-k/2}} \frac{x^k}{k!}
=\exp\Bigl(2a\arcsinh\frac{x}{2}\Bigr)
\end{equation}
and
\begin{equation}\label{(10.49.35)-first}
\sum_{k=0}^{\infty}\frac{(a)_{k/2}}{(a)_{-k/2}} \frac{x^k}{k!}
=\frac{2}{\sqrt{4+x^2}\,}\exp\Bigl[(2a-1)\arcsinh\frac{x}{2}\Bigr].
\end{equation}
In~\eqref{(10.49.32)-first} and~\eqref{(10.49.35)-first}, replacing $x$ by $2x$, utilizing extended Pochhammer symbol in~\eqref{extended-Pochhammer-dfn}, and employing extended binomial coefficient in~\eqref{Gen-Coeff-Binom} result in
\begin{equation}\label{(10.49.32)-rplace}
\te^{2a\arcsinh x}=a\sum_{k=0}^{\infty}\frac{\Gamma\bigl(a+\frac{k}{2}\bigr)}{\Gamma\bigl(a+1-\frac{k}{2}\bigr)}\frac{(2x)^k}{k!}
=1+a\sum_{k=1}^{\infty}\binom{a-1+\frac{k}{2}}{k-1}\frac{(2x)^k}{k}
\end{equation}
and
\begin{equation}\label{(10.49.35)-first-replace}
\frac{\exp[(2a-1)\arcsinh x]}{\sqrt{1+x^2}\,}
=\sum_{k=0}^{\infty}\frac{\Gamma\bigl(a+\frac{k}{2}\bigr)}{\Gamma\bigl(a-\frac{k}{2}\bigr)} \frac{(2x)^k}{k!}
=\sum_{k=0}^{\infty}\binom{a-1+\frac{k}{2}}{k}(2x)^k.
\end{equation}
Integrating on both sides of~\eqref{(10.49.35)-first-replace} with respect to $x\in(0,t)$ produces
\begin{equation}\label{(10.49.35)-first-replace-INT}
\frac{\exp[(2a-1)\arcsinh t]-1}{2a-1}
=\sum_{k=0}^{\infty}\binom{a-1+\frac{k}{2}}{k} \frac{2^kt^{k+1}}{k+1}.
\end{equation}
By virtue of the formula~\eqref{(12.1)-Quaintance}, we obtain
\begin{align*}
\binom{a-1+\frac{k}{2}}{k-1}&=\frac{1}{(k-1)!}\sum_{\ell=0}^{k-1}s(k-1,\ell)\biggl(a-1+\frac{k}{2}\biggr)^\ell\\
&=\frac{1}{(k-1)!}\sum_{\ell=0}^{k-1}s(k-1,\ell)\sum_{m=0}^{\ell}\binom{\ell}{m}\biggl(\frac{k-2}{2}\biggr)^{\ell-m}a^m\\
&=\frac{1}{(k-1)!}\sum_{m=0}^{k-1} \Biggl[\sum_{\ell=m}^{k-1}\binom{\ell}{m}s(k-1,\ell)\biggl(\frac{k-2}{2}\biggr)^{\ell-m}\Biggr]a^m\\
&=\frac{1}{(k-1)!}\sum_{m=0}^{k-1} Q(m+1,k-m-1;2)a^m
\end{align*}
and
\begin{align*}
\binom{a-1+\frac{k}{2}}{k}&=\frac{1}{k!}\sum_{\ell=0}^{k}s(k,\ell)\biggl(a-1+\frac{k}{2}\biggr)^\ell\\
&=\frac{1}{k!}\sum_{\ell=0}^{k}s(k,\ell)\sum_{m=0}^{\ell}\binom{\ell}{m}\biggl(\frac{k-2}{2}\biggr)^{\ell-m}a^m\\
&=\frac{1}{k!}\sum_{m=0}^{k} \Biggl[\sum_{\ell=m}^{k}\binom{\ell}{m}s(k,\ell)\biggl(\frac{k-2}{2}\biggr)^{\ell-m}\Biggr]a^m\\
&=\frac{1}{k!}\sum_{m=0}^{k} Q(m+1,k-m;3)a^m
\end{align*}
for $k\ge3$. Substituting these two finite sums into the right hand sides of~\eqref{(10.49.32)-rplace} and~\eqref{(10.49.35)-first-replace-INT} gives
\begin{gather*}
1+a\sum_{k=1}^{\infty}\binom{a-1+\frac{k}{2}}{k-1}\frac{(2x)^k}{k}
=1+2xa+2x^2a^2+a\sum_{k=3}^{\infty}\binom{a-1+\frac{k}{2}}{k-1}\frac{(2x)^k}{k}\\
=1+2xa+2x^2a^2+\sum_{k=3}^{\infty} \Biggl[\sum_{m=0}^{k-1} Q(m+1,k-m-1;2) a^{m+1}\Biggr] \frac{(2x)^k}{k!}\\
=1+\Biggl[2x+\sum_{k=3}^{\infty}Q(1,k-1;2) \frac{(2x)^k}{k!}\Biggr]a
+\Biggl[2x^2+\sum_{k=3}^{\infty} Q(2,k-2;2) \frac{(2x)^k}{k!}\Biggr]a^2\\
+\sum_{m=3}^{\infty}\Biggl[\sum_{k=m}^{\infty}Q(m,k-m;2) \frac{(2x)^k}{k!}\Biggr]a^{m}
\end{gather*}
and
\begin{gather*}
\sum_{k=0}^{\infty}\binom{a-1+\frac{k}{2}}{k} \frac{2^kt^{k+1}}{k+1}
=t-\frac{1}{2}t^2+\sum_{k=3}^{\infty}Q(1,k;3)\frac{2^kt^{k+1}}{(k+1)!}\\
+\Biggl[t^2-\frac{2}{3}t^3+\sum_{k=3}^{\infty} Q(2,k-1;3) \frac{2^kt^{k+1}}{(k+1)!}\Biggr]a\\
+\Biggl[\frac{2}{3}t^3+\sum_{k=3}^{\infty}Q(3,k-2;3) \frac{2^kt^{k+1}}{(k+1)!}\Biggr]a^2
+\sum_{m=3}^{\infty}\Biggl[\sum_{k=3}^{\infty}Q(m+1,k-m;3) \frac{2^kt^{k+1}}{(k+1)!}\Biggr]a^m.
\end{gather*}
On the other hand, the left hand sides of~\eqref{(10.49.32)-rplace} and~\eqref{(10.49.35)-first-replace-INT} can be expanded into
\begin{equation*}
\te^{2a\arcsinh x}=\sum_{m=0}^{\infty}\frac{(2\arcsinh x)^m}{m!}a^m
\end{equation*}
and
\begin{align*}
\frac{\exp[(2a-1)\arcsinh t]-1}{2a-1}
&=\sum_{m=1}^{\infty}\frac{(2a-1)^{m-1}(\arcsinh t)^m}{m!}\\
&=\sum_{m=0}^{\infty}\frac{(\arcsinh t)^{m+1}}{(m+1)!} \sum_{q=0}^{m}(-1)^{m-q}\binom{m}{q}(2a)^q\\
&=\sum_{q=0}^{\infty}\Biggl[\sum_{m=q}^{\infty}(-1)^{m-q} \binom{m}{q}\frac{(\arcsinh t)^{m+1}}{(m+1)!}\Biggr](2a)^q\\
&=\sum_{m=0}^{\infty}\Biggl[\sum_{\ell=m}^{\infty}(-1)^{\ell-m}\binom{\ell}{m}\frac{(\arcsinh t)^{\ell+1}}{(\ell+1)!}\Biggr](2a)^m.
\end{align*}
Accordingly, equating coefficients of $a^m$ for $m\in\mathbb{N}_0$, we obtain
\begin{align*}
2\arcsinh x&=2x+\sum_{k=3}^{\infty}Q(1,k-1;2) \frac{(2x)^k}{k!},\\
\frac{(2\arcsinh x)^2}{2!}&=2x^2+\sum_{k=3}^{\infty} Q(2,k-2;2)\frac{(2x)^k}{k!},\\
\frac{(2\arcsinh x)^m}{m!}&=\sum_{k=m}^{\infty}
Q(m,k-m;2) \frac{(2x)^k}{k!},\\
\sum_{\ell=0}^{\infty}(-1)^{\ell}\frac{(\arcsinh t)^{\ell+1}}{(\ell+1)!}
&=t-\frac{t^2}{2}+\sum_{k=3}^{\infty}Q(1,k;3)\frac{2^kt^{k+1}}{(k+1)!}\\
\Biggl[\sum_{\ell=1}^{\infty}(-1)^{\ell-1}\binom{\ell}{1}\frac{(\arcsinh t)^{\ell+1}}{(\ell+1)!}\Biggr]2&=t^2-\frac{2}{3}t^3+\sum_{k=3}^{\infty} Q(2,k-1;3)\frac{2^kt^{k+1}}{(k+1)!},\\
\Biggl[\sum_{\ell=2}^{\infty}(-1)^{\ell-2}\binom{\ell}{2}\frac{(\arcsinh t)^{\ell+1}}{(\ell+1)!}\Biggr]2^2&=\frac{2}{3}t^3+\sum_{k=3}^{\infty}Q(3,k-2;3)\frac{2^kt^{k+1}}{(k+1)!},\\
\Biggl[\sum_{\ell=m}^{\infty}(-1)^{\ell-m}\binom{\ell}{m}\frac{(\arcsinh t)^{\ell+1}}{(\ell+1)!}\Biggr]2^m&=\sum_{k=3}^{\infty}Q(m+1,k-m;3) \frac{2^kt^{k+1}}{(k+1)!}
\end{align*}
for $m\ge3$. Reformulating these seven series expansions and series identities arrives at
\begin{align*}
\frac{\arcsinh x}{x}&=1+\sum_{k=3}^{\infty}Q(1,k-1;2) \frac{(2x)^{k-1}}{k!}\\
&=1+\sum_{k=1}^{\infty}\frac{Q(1,2k;2)}{\binom{2k+1}{1}} \frac{(2x)^{2k}}{(2k)!} +\sum_{k=1}^{\infty}\frac{Q(1,2k+1;2)}{\binom{2k+2}{1}} \frac{(2x)^{2k+1}}{(2k+1)!}\\
&=1+\sum_{k=1}^{\infty}\frac{Q(1,2k;2)}{\binom{2k+1}{1}} \frac{(2x)^{2k}}{(2k)!},\\
\biggl(\frac{\arcsinh x}{x}\biggr)^2&=1+\frac{1}{2x^2}\sum_{k=3}^{\infty} Q(2,k-2;2)\frac{(2x)^k}{k!}\\
&=1+\sum_{k=1}^{\infty}\frac{2Q(2,k;2)}{(k+2)(k+1)} \frac{(2x)^{k}}{k!}\\
&=1+\sum_{k=1}^{\infty}\frac{Q(2,2k;2)}{\binom{2k+2}{2}} \frac{(2x)^{2k}}{(2k)!} +\sum_{k=1}^{\infty}\frac{Q(2,2k-1;2)}{\binom{2k+1}{2}}\frac{(2x)^{2k-1}}{(2k-1)!}\\
&=1+\sum_{k=1}^{\infty}\frac{Q(2,2k;2)}{\binom{2k+2}{2}}\frac{(2x)^{2k}}{(2k)!},\\
\biggl(\frac{\arcsinh x}{x}\biggr)^m&=\sum_{k=0}^{\infty}\frac{Q(m,k;2)}{\binom{m+k}{m}} \frac{(2x)^{k}}{k!}\\
&=1+\sum_{k=1}^{\infty}\frac{1}{\binom{m+2k}{m}} Q(m,2k;2) \frac{(2x)^{2k}}{(2k)!} +\sum_{k=1}^{\infty}\frac{Q(m,2k-1;2)}{\binom{m+2k-1}{m}} \frac{(2x)^{2k-1}}{(2k-1)!}\\
&=1+\sum_{k=1}^{\infty}\frac{1}{\binom{m+2k}{m}} Q(m,2k;2) \frac{(2x)^{2k}}{(2k)!},\\
\sum_{\ell=0}^{\infty}(-1)^{\ell}\frac{(\arcsinh t)^{\ell+1}}{(\ell+1)!}
&=t-\frac{t^2}{2}+\sum_{k=3}^{\infty}Q(1,k;3)\frac{2^kt^{k+1}}{(k+1)!}\\
&=t-\frac{t^2}{2}+\sum_{k=1}^{\infty}Q(1,2k+1;3) \frac{2^{2k+1}t^{2k+2}}{(2k+2)!} +\sum_{k=1}^{\infty}Q(1,2k+2;3) \frac{2^{2k+2}t^{2k+3}}{(2k+3)!},
\end{align*}
and
\begin{align*}
\sum_{\ell=0}^{\infty}(-1)^{\ell}\binom{\ell+1}{1}\frac{(\arcsinh t)^{\ell+2}}{(\ell+2)!}
&=\frac{1}{2}t^2-\frac{1}{3}t^3+\frac{1}{2^2}\sum_{k=3}^{\infty} Q(2,k-1;3)\frac{(2t)^{k+1}}{(k+1)!},\\
\sum_{\ell=0}^{\infty}(-1)^{\ell}\binom{\ell+2}{2}\frac{(\arcsinh t)^{\ell+3}}{(\ell+3)!} &=\frac{1}{6}t^3+\frac{1}{2^3}\sum_{k=3}^{\infty}Q(3,k-2;3)\frac{(2t)^{k+1}}{(k+1)!},\\
\sum_{\ell=0}^{\infty}(-1)^{\ell}\binom{\ell+m}{m}\frac{(\arcsinh t)^{\ell+m+1}}{(\ell+m+1)!} &=\frac{1}{2^{m+1}}\sum_{k=3}^{\infty}Q(m+1,k-m;3) \frac{(2t)^{k+1}}{(k+1)!}
\end{align*}
for $m\ge3$. Consequently, from the first three equations and the last three equations above, we conclude Maclaurin's series expansion~\eqref{arcsinh-series-expansion} and the series identities~\eqref{arcsinh-series-id-1}, \eqref{arcsinh-series-id-2}, and~\eqref{arcsinh-series-id-3}.
\par
In the fourth formula above, by virtue of~\eqref{(12.1)-Quaintance}, we obtain
\begin{equation}\label{Comb-ID-Q(3)-One}
Q(1,2k+1;3)=(2k+1)!\binom{\frac{2k-1}{2}}{2k+1}
\end{equation}
and
\begin{equation}\label{Comb-ID-Q(3)-Two}
Q(1,2k+2;3)=0.
\end{equation}
These two combinatorial identities imply
\begin{equation*}
\sum_{\ell=0}^{\infty}(-1)^{\ell}\frac{(\arcsinh t)^{\ell+1}}{(\ell+1)!}
=t-\frac{t^2}{2}+\sum_{k=1}^{\infty}2^{2k+1}\binom{\frac{2k-1}{2}}{2k+1}\frac{t^{2k+2}}{2k+2}.
\end{equation*}
Furthermore, since
\begin{equation*}
\sum_{\ell=0}^{\infty}(-1)^{\ell}\frac{(\arcsinh t)^{\ell+1}}{(\ell+1)!}
=-\sum_{\ell=1}^{\infty}\frac{(-\arcsinh t)^{\ell}}{\ell!}
=1-\sum_{\ell=0}^{\infty}\frac{(-\arcsinh t)^{\ell}}{\ell!}
=1-\te^{-\arcsinh t},
\end{equation*}
we acquire
\begin{equation*}
1-\te^{-\arcsinh t}
=t-\frac{t^2}{2}+\sum_{k=1}^{\infty}2^{2k+1}\binom{\frac{2k-1}{2}}{2k+1}\frac{t^{2k+2}}{2k+2}.
\end{equation*}
Replacing $t$ by $-t$ in the above equation leads to Maclaurin's series expansion~\eqref{exp-arcsinh-sereis-exapnsion}.
The proof of Theorem~\ref{arcsinh-identity-thm} is complete.
\end{proof}

\begin{cor}\label{arcsinh-idty-cor}
For $m\ge2$, the composite $\Gamma(m,\arcsinh t)$ has Maclaurin's series expansions
\begin{align}\label{arcsinh-series-incomp-1}
\Gamma(2,\arcsinh t)
&=1-\frac{1}{2}t^2+\frac{1}{3}t^3-\frac{1}{4}\sum_{k=3}^{\infty} Q(2,k-1;3)\frac{(2t)^{k+1}}{(k+1)!},\\
\Gamma(3,\arcsinh t)
&=2-\frac{1}{3}t^3-\frac{1}{8}\sum_{k=3}^{\infty}Q(3,k-2;3) \frac{(2t)^{k+1}}{(k+1)!},
\label{arcsinh-series-incomp-2}
\end{align}
and, for $m\ge3$,
\begin{equation}\label{arcsinh-series-incomp-3}
\Gamma(1+m,\arcsinh t) =m!-\frac{m!}{2^{m+1}}\sum_{k=m}^{\infty}Q(m+1,k-m;3) \frac{(2t)^{k+1}}{(k+1)!}.
\end{equation}
where the quantities $Q(m,k;3)$ are given by~\eqref{Q(m-k)-sum-dfn} and the incomplete gamma function $\Gamma(a,x)$ is defined by~\eqref{incomplete-gamma-dfn}.
\end{cor}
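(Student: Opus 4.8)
The plan is to reduce the entire statement to the three series identities \eqref{arcsinh-series-id-1}, \eqref{arcsinh-series-id-2}, and \eqref{arcsinh-series-id-3} already established in Theorem~\ref{arcsinh-identity-thm}, by recognizing their left-hand sides as a truncated (lower incomplete) gamma integral evaluated at $\arcsinh t$. First I would record the elementary splitting of the defining integral~\eqref{incomplete-gamma-dfn}: for $m\in\mathbb{N}$ and the abbreviation $u=\arcsinh t$,
\[
\Gamma(m+1,u)=\int_{u}^{\infty}\te^{-s}s^{m}\td s=m!-\int_{0}^{u}\te^{-s}s^{m}\td s,
\]
since $\int_{0}^{\infty}\te^{-s}s^{m}\td s=\Gamma(m+1)=m!$. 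Thus the whole task is to expand the truncated integral $\int_{0}^{u}\te^{-s}s^{m}\td s$ into a power series and then substitute $u=\arcsinh t$.

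Second, I would expand $\te^{-s}=\sum_{\ell=0}^{\infty}(-1)^{\ell}s^{\ell}/\ell!$, integrate term by term over $[0,u]$ (legitimate because the exponential series converges uniformly on the compact interval), and then rewrite the coefficients using the elementary factorial identity $m!\binom{\ell+m}{m}/(\ell+m+1)!=1/[\ell!(\ell+m+1)]$. This yields
\[
\int_{0}^{u}\te^{-s}s^{m}\td s=m!\sum_{\ell=0}^{\infty}(-1)^{\ell}\binom{\ell+m}{m}\frac{u^{\ell+m+1}}{(\ell+m+1)!}.
\]
The right-hand side is precisely $m!$ times the left-hand side of the corresponding series identity in Theorem~\ref{arcsinh-identity-thm} (after noting that $\binom{\ell+1}{1}=\ell+1$ and $\binom{\ell+2}{2}=\tfrac12(\ell+1)(\ell+2)$ reproduce the prefactors appearing in \eqref{arcsinh-series-id-1} and \eqref{arcsinh-series-id-2}).

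Finally, with $u=\arcsinh t$, I would substitute \eqref{arcsinh-series-id-1} in the case $m=1$ to obtain $\Gamma(2,\arcsinh t)$, substitute \eqref{arcsinh-series-id-2} in the case $m=2$ to obtain $\Gamma(3,\arcsinh t)$, and substitute \eqref{arcsinh-series-id-3} for general $m\ge3$. In each case the relation $\Gamma(m+1,\arcsinh t)=m!-\int_{0}^{\arcsinh t}\te^{-s}s^{m}\td s$ produces \eqref{arcsinh-series-incomp-1}, \eqref{arcsinh-series-incomp-2}, and \eqref{arcsinh-series-incomp-3} directly, once the prefactor $m!$ is absorbed into the constants $\tfrac14$, $\tfrac18$, $\tfrac{1}{2^{m+1}}$ already present on the right-hand sides of those identities, and once the low-order terms ($\tfrac12t^2$, $-\tfrac13t^3$ in the first case and $\tfrac13t^3$ in the second) are carried through the subtraction from $m!$ with the correct signs.

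The main obstacle is merely clerical rather than analytic: it is the careful index and factorial bookkeeping needed to match the binomial prefactor $\binom{\ell+m}{m}$ to the coefficients of the truncated integral, together with explaining why the three cases $m=1,2$ and $m\ge3$ are stated separately (the explicitly displayed low-order terms in \eqref{arcsinh-series-id-1} and \eqref{arcsinh-series-id-2} are exactly the first nonvanishing contributions of the truncated integral and must be tracked through the subtraction). No genuine convergence difficulty arises, since term-by-term integration is justified uniformly on $[0,\arcsinh t]$.
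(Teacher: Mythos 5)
Your proof is correct, and it reaches the paper's key intermediate identity by a genuinely different and more elementary route. Both you and the paper reduce the corollary to the series identities~\eqref{arcsinh-series-id-1}, \eqref{arcsinh-series-id-2}, and~\eqref{arcsinh-series-id-3} by establishing the same bridging fact, namely
\begin{equation*}
m!-\Gamma(1+m,x)=m!\sum_{\ell=0}^{\infty}(-1)^{\ell}\binom{\ell+m}{m}\frac{x^{\ell+m+1}}{(\ell+m+1)!},
\end{equation*}
and then setting $x=\arcsinh t$; the difference lies entirely in how this fact is proved. The paper starts from the closed-form formula $\Gamma(1+m,x)=m!\te^{-x}\sum_{k=0}^{m}\frac{x^k}{k!}$ (cited from Gradshteyn--Ryzhik and Jameson), rewrites $1-\te^{-x}\sum_{k=0}^{m}\frac{x^k}{k!}$ as $\te^{-x}\sum_{k=m+1}^{\infty}\frac{x^k}{k!}$, expands the product of the two series as a Cauchy product, and compresses the resulting inner alternating sum via the combinatorial identity $\sum_{\ell=0}^{k}(-1)^\ell\binom{k+m+1}{\ell}=(-1)^k\binom{k+m}{k}$, which it deduces from~\eqref{Sprugnoli-Gould-2006-p.18(1.5)}. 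You instead split the defining integral as $\Gamma(m+1,u)=m!-\int_{0}^{u}\te^{-s}s^{m}\td s$, expand $\te^{-s}$ and integrate term by term (legitimate by uniform convergence on compact intervals), and convert $\frac{1}{\ell!\,(\ell+m+1)}$ into $\frac{m!}{(\ell+m+1)!}\binom{\ell+m}{m}$ by an elementary factorial manipulation. Your route is shorter and self-contained, needing neither the cited closed form nor the Cauchy-product and binomial-sum computation; the paper's route, in exchange, records along the way an alternating binomial identity of some independent interest. The final substitutions --- $m=1$ for~\eqref{arcsinh-series-incomp-1}, $m=2$ for~\eqref{arcsinh-series-incomp-2} (where $(\ell+1)(\ell+2)=2\binom{\ell+2}{2}$ absorbs the factor $2!$), and $m\ge3$ for~\eqref{arcsinh-series-incomp-3} --- are handled the same way in both proofs, and your bookkeeping of the low-order terms and of the prefactors $\tfrac14$, $\tfrac18$, $\tfrac{1}{2^{m+1}}$ is accurate.
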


\begin{proof}
In~\cite[p.~908, 8.352.2]{Gradshteyn-Ryzhik-Table-8th} and~\cite[Theorem~3]{Jameson-MZ-2016}, there is the formula
\begin{equation*}
\Gamma(1+m,x)=m!\te^{-x}\sum_{k=0}^{m}\frac{x^k}{k!}, \quad m=0,1,2,\dotsc.
\end{equation*}
Hence, it follows that
\begin{align*}
(-1)^m\biggl[1-\frac{\Gamma(1+m,x)}{m!}\biggr]
&=(-1)^m\Biggl(1-\te^{-x}\sum_{k=0}^{m}\frac{x^k}{k!}\Biggr)\\
&=(-1)^m\Biggl[1-\te^{-x}\Biggl(\te^x-\sum_{k=m+1}^{\infty}\frac{x^k}{k!}\Biggr)\Biggr]\\
&=(-1)^m\te^{-x}\sum_{k=m+1}^{\infty}\frac{x^k}{k!}\\
&=(-1)^m\Biggl[\sum_{k=0}^{\infty}(-1)^k\frac{x^k}{k!}\Biggr] \Biggl[\sum_{k=0}^{\infty}\frac{x^{k+m+1}}{(k+m+1)!}\Biggr]\\
&=(-1)^mx^{m+1}\sum_{k=0}^{\infty}\Biggl[\sum_{\ell=0}^{k}\frac{(-1)^\ell}{\ell!}\frac{1}{(k-\ell+m+1)!}\Biggr]x^k\\
&=(-1)^m\sum_{k=0}^{\infty}\Biggl[\sum_{\ell=0}^{k}(-1)^\ell\binom{k+m+1}{\ell}\Biggr]\frac{x^{k+m+1}}{(k+m+1)!}\\
&=(-1)^m\sum_{k=0}^{\infty}\frac{(-1)^k (k+1)}{k+m+1}\binom{k+m+1}{k+1}\frac{x^{k+m+1}}{(k+m+1)!}\\
&=\sum_{k=m}^{\infty}\frac{(-1)^{k} (k-m+1)}{k+1}\binom{k+1}{k-m+1}\frac{x^{k+1}}{(k+1)!}\\
&=\sum_{k=m}^{\infty}(-1)^{k}\binom{k}{m}\frac{x^{k+1}}{(k+1)!}\\
&=\sum_{k=0}^{\infty}(-1)^{k+m}\binom{k+m}{m}\frac{x^{k+m+1}}{(k+m+1)!}
\end{align*}
for $m\in\mathbb{N}_0$, where we used the combinatorial identity
\begin{equation*}
\sum_{\ell=0}^{k}(-1)^\ell\binom{k+m+1}{\ell}=\frac{(-1)^k (k+1)}{k+m+1}\binom{k+m+1}{k+1}
=(-1)^k\binom{k+m}{k}
\end{equation*}
which can be derived from the identity
\begin{equation}\label{Sprugnoli-Gould-2006-p.18(1.5)}
\sum_{k=0}^{n}(-1)^k\binom{x}{k}=(-1)^n\binom{x-1}{n}=\prod_{k=1}^{n}\biggl(1-\frac{x}{k}\biggr)
\end{equation}
in~\cite[p.~18, (1.5)]{Sprugnoli-Gould-2006}.
Substituting this result into the series identities~\eqref{arcsinh-series-id-1}, \eqref{arcsinh-series-id-2}, and~\eqref{arcsinh-series-id-3} in Theorem~\ref{arcsinh-identity-thm} and rearranging yield Maclaurin's series expansions~\eqref{arcsinh-series-incomp-1}, \eqref{arcsinh-series-incomp-2}, and~\eqref{arcsinh-series-incomp-3}.
The proof of Corollary~\ref{arcsinh-idty-cor} is complete.
\end{proof}

\section{Maclaurin's series expansions for positive integer powers of inverse (hyperbolic) tangent function}\label{sec-2guesses}
In this section, we discuss Maclaurin's series expansions of the inverse tangent function $\arctan t$ and the inverse hyperbolic tangent function $\arctanh t$.

\subsection{Maclaurin's series expansion for positive integer powers of inverse tangent function}
It is well known that
\begin{equation*}
\arctan t=\sum_{k=0}^{\infty}(-1)^{k}\frac{t^{2k+1}}{2k+1}, \quad |t|<1.
\end{equation*}
In~\cite[pp.~152--153, (820) and (821)]{Jolley-B-1961}, there are Maclaurin's series expansions
\begin{align*}
\frac{(\arctan t)^2}{2!}&=\sum_{k=0}^{\infty}(-1)^{k}\Biggl(\sum_{\ell=0}^{k}\frac{1}{2\ell+1}\Biggr)\frac{t^{2k+2}}{2k+2}\\
&=\frac{t^2}{2}-\biggl(1+\frac{1}{3}\biggr)\frac{t^4}{4} +\biggl(1+\frac{1}{3}+\frac{1}{5}\biggr)\frac{t^6}{6}-\dotsm
\end{align*}
and
\begin{align*}
\frac{(\arctan t)^3}{3!}&=\sum_{k=0}^{\infty}(-1)^{k}\Biggl(\sum_{\ell_2=0}^{k}\frac{1}{2\ell_2+2}\sum_{\ell_1=0}^{\ell_2} \frac{1}{2\ell_1+1}\Biggr) \frac{t^{2k+3}}{2k+3}\\
&=\frac{1}{2}\frac{t^3}{3} -\biggl[\frac{1}{2}+\frac{1}{4}\biggl(1+\frac{1}{3}\biggr)\biggr]\frac{t^5}{5} +\biggl[\frac{1}{2}+\frac{1}{4}\biggl(1+\frac{1}{3}\biggr)+\frac{1}{6}\biggl(1+\frac{1}{3}+\frac{1}{5}\biggr)\biggr]\frac{t^7}{7}-\dotsm
\end{align*}
for $|t|<1$.
What is the general expression of Maclaurin's series expansion of $(\arctan t)^n$ for $n>3$ and $|t|<1$? We guess that it should be
\begin{align}\notag
\frac{(\arctan t)^n}{n!}&=\sum_{k=0}^{\infty}(-1)^{k} \Biggl(\sum_{\ell_{n-1}=0}^{k}\frac{1}{2\ell_{n-1}+n-1} \sum_{\ell_{n-2}=0}^{\ell_{n-1}}\frac{1}{2\ell_{n-2}+n-2} \dotsm \sum_{\ell_2=0}^{\ell_3}\frac{1}{2\ell_2+2}\sum_{\ell_1=0}^{\ell_2} \frac{1}{2\ell_1+1}\Biggr)\frac{t^{2k+n}}{2k+n}\\
&=\sum_{k=0}^{\infty}(-1)^{k}\Biggl(\prod_{m=1}^{n-1}\sum_{\ell_m=0}^{\ell_{m+1}} \frac{1}{2\ell_{m}+m}\Biggr)\frac{t^{2k+n}}{2k+n}
\label{arctan-power-series-expansion-gen}
\end{align}
for $|t|<1$ and all $n\in\mathbb{N}$ with $\ell_n=k$, where the product is understood to be $1$ if the starting index exceeds the finishing index. For example, when $n=4,5,6$, we have
\begin{align*}
\frac{(\arctan t)^4}{4!}&=\sum_{k=0}^{\infty}(-1)^{k}\Biggl(\sum_{\ell_{3}=0}^{k}\frac{1}{2\ell_{3}+3} \sum_{\ell_2=0}^{\ell_3}\frac{1}{2\ell_2+2}\sum_{\ell_1=0}^{\ell_2} \frac{1}{2\ell_1+1}\Biggr)\frac{t^{2k+4}}{2k+4},\\
\frac{(\arctan t)^5}{5!}&=\sum_{k=0}^{\infty}(-1)^{k}\Biggl(\sum_{\ell_{4}=0}^{k}\frac{1}{2\ell_{4}+4} \sum_{\ell_{3}=0}^{\ell_4}\frac{1}{2\ell_{3}+3} \sum_{\ell_2=0}^{\ell_3}\frac{1}{2\ell_2+2}\sum_{\ell_1=0}^{\ell_2} \frac{1}{2\ell_1+1}\Biggr)\frac{t^{2k+5}}{2k+5},
\end{align*}
and
\begin{equation*}
\frac{(\arctan t)^6}{6!}=\sum_{k=0}^{\infty}(-1)^{k}\Biggl(\sum_{\ell_{5}=0}^{k}\frac{1}{2\ell_{5}+5} \sum_{\ell_{4}=0}^{\ell_5}\frac{1}{2\ell_{4}+4} \sum_{\ell_{3}=0}^{\ell_4}\frac{1}{2\ell_{3}+3} \sum_{\ell_2=0}^{\ell_3}\frac{1}{2\ell_2+2}\sum_{\ell_1=0}^{\ell_2} \frac{1}{2\ell_1+1}\Biggr)\frac{t^{2k+6}}{2k+6}
\end{equation*}
for $|t|<1$.
\par
In~\cite[p.~122, 6.42.3]{Adams-Hippisley-Smithsonian1922}, there is Maclaurin's series expansion
\begin{equation}\label{Schwatt-Phil.Mag.1916-series}
(\arctan x)^p=p!\sum_{k_0=1}^{\infty}(-1)^{k_0-1}\frac{x^{2k_0+p-2}}{2k_0+p-2} \prod_{\alpha=1}^{p-1}\Biggl(\sum_{k_\alpha=1}^{k_{\alpha-1}}\frac{1}{2k_\alpha+p-\alpha-2}\Biggr)
\end{equation}
for $p\in\mathbb{N}$. The Maclaurin's series expansion~\eqref{Schwatt-Phil.Mag.1916-series} was proved in~\cite{Schwatt-Phil.Mag.1916} and is obviously equivalent to~\eqref{arctan-power-series-expansion-gen}. Hence, Maclaurin's series expansion~\eqref{arctan-power-series-expansion-gen} is true.
\par
By the way, in appearance, Maclaurin's series expansion~\eqref{arctan-power-series-expansion-gen} is more beautiful than~\eqref{Schwatt-Phil.Mag.1916-series}.

\subsection{Maclaurin's series expansion for positive integer powers of inverse hyperbolic tangent function}
It is also well known that
\begin{equation*}
\arctanh t=\sum_{k=0}^{\infty}\frac{t^{2k+1}}{2k+1}, \quad |t|<1.
\end{equation*}
Motivated by the difference between~\eqref{arcsin-series-expansion-unify} and~\eqref{arcsinh-series-expansion}, basing on~\eqref{arctan-power-series-expansion-gen}, we further guess that
\begin{equation}
\begin{aligned}\label{arctanh-power-series-expansion-gen}
\frac{(\arctanh t)^n}{n!}&=\sum_{k=0}^{\infty}\Biggl(\sum_{\ell_{n-1}=0}^{k}\frac{1}{2\ell_{n-1}+n-1} \sum_{\ell_{n-2}=0}^{\ell_{n-1}}\frac{1}{2\ell_{n-2}+n-2} \dotsm \sum_{\ell_2=0}^{\ell_3}\frac{1}{2\ell_2+2}\sum_{\ell_1=0}^{\ell_2} \frac{1}{2\ell_1+1}\Biggr)\frac{t^{2k+n}}{2k+n}\\
&=\sum_{k=0}^{\infty}\Biggl(\prod_{m=1}^{n-1}\sum_{\ell_m=0}^{\ell_{m+1}} \frac{1}{2\ell_{m}+m}\Biggr)\frac{t^{2k+n}}{2k+n}
\end{aligned}
\end{equation}
for $|t|<1$ and all $n\in\mathbb{N}$ with $\ell_n=k$, where the product is understood to be $1$ if the starting index exceeds the finishing index. For example, when $n=2,3,4,5,6$, we have
\begin{align*}
\frac{(\arctanh t)^2}{2!}&=\sum_{k=0}^{\infty} \Biggl(\sum_{\ell=0}^{k}\frac{1}{2\ell+1}\Biggr) \frac{t^{2k+2}}{2k+2},\\
\frac{(\arctanh t)^3}{3!}&=\sum_{k=0}^{\infty}\Biggl(\sum_{\ell_2=0}^{k}\frac{1}{2\ell_2+2} \sum_{\ell_1=0}^{\ell_2} \frac{1}{2\ell_1+1}\Biggr) \frac{t^{2k+3}}{2k+3},\\
\frac{(\arctanh t)^4}{4!}&=\sum_{k=0}^{\infty} \Biggl(\sum_{\ell_{3}=0}^{k}\frac{1}{2\ell_{3}+3} \sum_{\ell_2=0}^{\ell_3}\frac{1}{2\ell_2+2}\sum_{\ell_1=0}^{\ell_2} \frac{1}{2\ell_1+1}\Biggr)\frac{t^{2k+4}}{2k+4},\\
\frac{(\arctanh t)^5}{5!}&=\sum_{k=0}^{\infty} \Biggl(\sum_{\ell_{4}=0}^{k}\frac{1}{2\ell_{4}+4} \sum_{\ell_{3}=0}^{\ell_4}\frac{1}{2\ell_{3}+3} \sum_{\ell_2=0}^{\ell_3}\frac{1}{2\ell_2+2}\sum_{\ell_1=0}^{\ell_2} \frac{1}{2\ell_1+1}\Biggr)\frac{t^{2k+5}}{2k+5},
\end{align*}
and
\begin{equation*}
\frac{(\arctanh t)^6}{6!}=\sum_{k=0}^{\infty} \Biggl(\sum_{\ell_{5}=0}^{k}\frac{1}{2\ell_{5}+5} \sum_{\ell_{4}=0}^{\ell_5}\frac{1}{2\ell_{4}+4} \sum_{\ell_{3}=0}^{\ell_4}\frac{1}{2\ell_{3}+3} \sum_{\ell_2=0}^{\ell_3}\frac{1}{2\ell_2+2} \sum_{\ell_1=0}^{\ell_2} \frac{1}{2\ell_1+1}\Biggr)\frac{t^{2k+6}}{2k+6}
\end{equation*}
for $|t|<1$.

\begin{proof}[{Proof of Maclaurin's series expansion~\eqref{arctanh-power-series-expansion-gen}}]
Imitating the proof of~\eqref{Schwatt-Phil.Mag.1916-series} in~\cite{Schwatt-Phil.Mag.1916}, we now give a proof of Maclaurin's series expansion~\eqref{arctanh-power-series-expansion-gen}.
\par
It is clear that
\begin{equation*}
\arctanh t=\int_{0}^{t}\frac{\td x}{1-x^2}
=\sum_{k=0}^{\infty}\int_{0}^{t}x^{2k}\td x
=\sum_{k=0}^{\infty}\frac{t^{2k+1}}{2k+1}
\end{equation*}
and
\begin{align*}
(\arctanh t)^2&=2\int_{0}^{t}\frac{\arctanh x}{1-x^2}\td x\\
&=2\int_{0}^{t}\Biggl(\sum_{k=0}^{\infty}\frac{x^{2k+1}}{2k+1}\Biggr) \Biggl(\sum_{k=0}^{\infty}x^{2k}\Biggr)\td x\\
&=2\int_{0}^{t}\sum_{k=0}^{\infty}\frac{1}{2k+1}\Biggl(\sum_{\ell=0}^{\infty}x^{2\ell+2k+1}\Biggr)\td x\\
&=2\int_{0}^{t}\sum_{k=0}^{\infty}\frac{1}{2k+1}\Biggl(\sum_{\ell=k}^{\infty}x^{2\ell+1}\Biggr)\td x\\
&=2!\sum_{k=0}^{\infty}\frac{1}{2k+1}\Biggl(\sum_{\ell=k}^{\infty}\frac{t^{2\ell+2}}{2\ell+2}\Biggr)\\
&=2!\sum_{\ell=0}^{\infty}\Biggl(\sum_{k=0}^{\ell}\frac{1}{2k+1}\Biggr)\frac{t^{2\ell+2}}{2\ell+2}.
\end{align*}
If Maclaurin's series expansion~\eqref{arctanh-power-series-expansion-gen} is true, then
\begin{align*}
(\arctanh t)^{n+1}&=(n+1)\int_{0}^{t}\frac{(\arctanh x)^{n}}{1-x^2}\td x\\
&=(n+1)!\int_{0}^{t}\sum_{k=0}^{\infty}\Biggl(\prod_{m=1}^{n-1}\sum_{\ell_m=0}^{\ell_{m+1}} \frac{1}{2\ell_{m}+m}\Biggr)\frac{x^{2k+n}}{2k+n}\sum_{\ell=0}^{\infty}x^{2\ell}\td x\\
&=(n+1)!\sum_{k=0}^{\infty}\Biggl(\prod_{m=1}^{n-1}\sum_{\ell_m=0}^{\ell_{m+1}} \frac{1}{2\ell_{m}+m}\Biggr)\frac{1}{2k+n}\sum_{\ell=0}^{\infty}\int_{0}^{t}x^{2\ell+2k+n}\td x\\
&=(n+1)!\sum_{k=0}^{\infty}\Biggl(\prod_{m=1}^{n-1}\sum_{\ell_m=0}^{\ell_{m+1}} \frac{1}{2\ell_{m}+m}\Biggr)\frac{1}{2k+n}\sum_{\ell=0}^{\infty}\frac{x^{2\ell+2k+n+1}}{2\ell+2k+n+1}\\
&=(n+1)!\sum_{k=0}^{\infty}\frac{1}{2k+n}\Biggl(\sum_{\ell_{n-1}=0}^{k}\frac{1}{2\ell_{n-1}+n-1} \prod_{m=1}^{n-2}\sum_{\ell_m=0}^{\ell_{m+1}} \frac{1}{2\ell_{m}+m}\Biggr)\sum_{\ell=k}^{\infty}\frac{x^{2\ell+n+1}}{2\ell+n+1}\\
&=(n+1)!\sum_{\ell=0}^{\infty}\sum_{k=0}^{\ell}\frac{1}{2k+n}\Biggl(\sum_{\ell_{n-1}=0}^{k}\frac{1}{2\ell_{n-1}+n-1} \prod_{m=1}^{n-2}\sum_{\ell_m=0}^{\ell_{m+1}} \frac{1}{2\ell_{m}+m}\Biggr)\frac{x^{2\ell+n+1}}{2\ell+n+1}\\
&=(n+1)!\sum_{k=0}^{\infty}\Biggl(\prod_{m=1}^{n}\sum_{\ell_m=0}^{\ell_{m+1}} \frac{1}{2\ell_{m}+m}\Biggr)\frac{t^{2k+n+1}}{2k+n+1},
\end{align*}
where $\ell_{n+1}=k$.
By induction, Maclaurin's series expansion~\eqref{arctanh-power-series-expansion-gen} is thus proved.
\end{proof}

\section{Useful remarks}\label{sec-power-remarks}

In this section, we state several useful remarks on our main results and related stuffs, including a possibly new combinatorial identity similar to~\eqref{1st-stirling-2k+1} and~\eqref{3rd-comb-id} in Corollary~\ref{arcsin-deriv-comb-id-cor}.

\begin{rem}
Maclaurin's series expansion~\eqref{arcsin-series-expansion-unify} in Theorem~\ref{arcsin-series-expansion-unify-thm} is recovered in~\cite[Section~6]{Taylor-arccos-v2.tex} and is generalized in~\cite[Theorem~4.1]{Maclaurin-series-arccos-v3.tex}.
The closed-form formula~\eqref{Bell-Oertel-closed-Eq} in Theorem~\ref{Bell-Oertel-closed-thm} is reconsidered in~\cite[Theorem~2.2]{Maclaurin-series-arccos-v3.tex}.
\end{rem}

\begin{rem}
In order to avoid the indefinite case $0^0$, we do not include the terms $1$ behind equal signs in~\eqref{arcsin-series-expansion-unify}, \eqref{arcsin-diiff-series}, and~\eqref{arcsinh-series-expansion}, the terms $\frac{1}{2}t^2-\frac{1}{3}t^3$ in~\eqref{arcsinh-series-id-1}, the terms $\frac{1}{3}t^3$ in~\eqref{arcsinh-series-id-2}, the terms $1-\frac{1}{2}t^2+\frac{1}{3}t^3$ in~\eqref{arcsinh-series-incomp-1}, and the terms $2-\frac{1}{3}t^3$ in~\eqref{arcsinh-series-incomp-2} into their corresponding sums, while we do not include the identity~\eqref{1st-stirling-2k+1} into~\eqref{3rd-comb-id}. This idea has been reflected in the proofs of Theorems~\ref{arcsin-series-expansion-unify-thm} and~\ref{arcsinh-identity-thm}.
\end{rem}

\begin{rem}
Theorem~\ref{arcsin-series-expansion-unify-thm}, Theorem~\ref{Bell-Oertel-closed-thm}, and Theorem~\ref{logsine-series-expnsion-thm} give answers to three unification problems posed in~\cite[Remark~5.3]{AIMS-Math20210491.tex}.
\end{rem}

\begin{rem}
When $m=1$, by virtue of~\eqref{(12.1)-Quaintance}, Maclaurin's series expansion~\eqref{arcsin-series-expansion-unify} in Theorem~\ref{arcsin-series-expansion-unify-thm} and Maclaurin's series expansion~\eqref{arcsinh-series-expansion} in Theorem~\ref{arcsinh-identity-thm} become
\begin{equation*}
\frac{\arcsin t}{t}
=1+\sum_{k=1}^{\infty} (-1)^k\binom{\frac{2k-1}{2}}{2k}\frac{(2t)^{2k}}{2k+1}
\end{equation*}
and
\begin{equation*}
\frac{\arcsinh x}{x}
=1+\sum_{k=1}^{\infty} \binom{\frac{2k-1}{2}}{2k} \frac{(2x)^{2k}}{(2k+1)!}.
\end{equation*}
\par
When $k=0$, by virtue of~\eqref{(12.1)-Quaintance}, the series representation~\eqref{logsine-series-expnsion-represent} in Theorem~\ref{logsine-series-expnsion-thm} becomes
\begin{align*}
\ls_j(\theta)
&=2(\ln2)^{j-1}\sin\biggl(\frac{\theta}{2}\biggr) \Biggl[\sum_{q=1}^{\infty} (-1)^{q+1}\binom{\frac{2q-1}{2}}{2q} \biggl(2\sin\frac{\theta}{2}\biggr)^{2q}\\
&\quad\times\sum_{\ell=0}^{j-1}\binom{j-1}{\ell}\biggl(\frac{\ln\sin\frac{\theta}{2}}{\ln2}\biggr)^{\ell} \sum_{p=0}^{\ell}\frac{(-1)^p\langle\ell\rangle_{p}} {(2q+1)^{p+1}\bigl(\ln\sin\frac{\theta}{2}\bigr)^{p}}\\
&\quad-\sum_{\ell=0}^{j-1}\binom{j-1}{\ell} \biggl(\frac{\ln\sin\frac{\theta}{2}}{\ln2}\biggr)^{\ell} \sum_{p=0}^{\ell} \frac{(-1)^p\langle\ell\rangle_{p}}{\bigl(\ln\sin\frac{\theta}{2}\bigr)^{p}}\Biggr],
\end{align*}
where $\ls_j(\theta)=\ls_j^{(0)}(\theta)$ is the logsine function defined by~\eqref{generalized-logsine-dfn} and $\langle\ell\rangle_{p}$ is defined by~\eqref{Fall-Factorial-Dfn-Eq}.
\end{rem}

\begin{rem}
In~\cite[p.~122, 6.42]{Adams-Hippisley-Smithsonian1922}, \cite[pp.~262--263, Proposition~15]{Berndt-Ramanujan-B-I}, \cite[pp.~50--51 and p.~287]{Borwein-Bailey-Girgensohn-Experim-2004}, \cite[p.~384]{Borwein-2-book-87}, \cite[p.~2, (2.1)]{Borwein-Chamberland-IJMMS-2007}, \cite[p.~188, Example~1]{Bromwich-1908}, \cite[Lemma~2]{Chen-CP-ITSF-2012}, \cite[p.~308]{Davydychev-Kalmykov-2001}, \cite[pp.~88--90]{Edwards-1982-DC}, \cite[p.~61, 1.645]{Gradshteyn-Ryzhik-Table-8th}, ~\cite[pp.~124--125, (666); pp.~146--147, (778); pp.~148--149, (783) and (784); pp.~154--155, (832) and~(834); pp.~176--177, (956)]{Jolley-B-1961}, \cite[p.~1011]{Konheim-Wrench-Klamkin-1962Monthly}, \cite[p.~453]{Lehmer-Monthly-1985}, \cite[Section~6.3]{Catalan-Int-Surv.tex}, \cite[p.~126]{Schwatt-B-2nd1924}, \cite{Spiegel-Monthly-1953}, \cite[p.~59, (2.56)]{Wilf-GF-2006-3rd}, or~\cite[p.~676, (2.2)]{Zhang-Chen-JMI-2020}, one can find Maclaurin's series expansions
\begin{equation}
\begin{aligned}\label{Lehmer-Monthly-1985-arcsin-square-expan}
\arcsin x&=\sum_{\ell=0}^{\infty}\frac{1}{2^{2\ell}}\binom{2\ell}{\ell}\frac{x^{2\ell+1}}{2\ell+1},\quad |x|<1,\\
\biggl(\frac{\arcsin x}{x}\biggr)^2&=2!\sum_{k=0}^{\infty} [(2k)!!]^2 \frac{x^{2k}}{(2k+2)!},\quad |x|<1,\\
(\arcsin x)^3&=3!\sum_{\ell=0}^{\infty}[(2\ell+1)!!]^2 \Biggl[\sum_{k=0}^{\ell}\frac{1}{(2k+1)^2}\Biggr] \frac{x^{2\ell+3}}{(2\ell+3)!},\quad |x|<1,
\end{aligned}
\end{equation}
or their variants.
\par
In the paper~\cite{Qi-Chen-Lim-RNA.tex}, those three series expansions in~\eqref{Lehmer-Monthly-1985-arcsin-square-expan} were applied to recover and establish several known and new combinatorial identities containing the ratio of two central binomial coefficients $\binom{2k}{k}$. The central binomial coefficient $\binom{2k}{k}$ is related to the Catalan numbers~\cite{Catalan-Int-Surv.tex} in combinatorial number theory.
In the preprint~\cite{Kobayashi-arXiv-2021}, those three series expansions in~\eqref{Lehmer-Monthly-1985-arcsin-square-expan} and Maclaurin's series expansion of $(\arcsin x)^4$ were applied largely.
\par
Comparing the second series expansion in~\eqref{Lehmer-Monthly-1985-arcsin-square-expan} with Maclaurin's series expansion~\eqref{arcsin-series-expansion-unify} for $m=2$ in Theorem~\ref{arcsin-series-expansion-unify-thm}, we obtain the identity
\begin{equation}\label{2n-stirling-n!square}
Q(2,2k;2)=(-1)^k(k!)^2, \quad k\in\mathbb{N}.
\end{equation}
This combinatorial identity is recovered in~\cite[Lemma~3.1 and Remark~3.3]{Taylor-arccos-v2.tex}.
\par
The combinatorial identity~\eqref{2n-stirling-n!square} and those in~\eqref{Comb-ID-Q(3)-One} and~\eqref{Comb-ID-Q(3)-Two} are possibly new.
\end{rem}

\begin{rem}
By virtue of the formula~\eqref{Sprugnoli-Gould-2006-p.18(1.5)}, we can reformulated the equation~(2.1) in~\cite[Theorem~2.1]{CDM-68111.tex} and the equations~(1.5) and~(1.6) in~\cite[Section~1.3]{Bell-value-elem-funct.tex} as
\begin{equation*}
\bell_{n,k}(\langle\alpha\rangle_1, \langle\alpha\rangle_2, \dotsc,\langle\alpha\rangle_{n-k+1})
=(-1)^k\frac{n!}{k!}\sum_{\ell=0}^{k}(-1)^{\ell}\binom{k}{\ell}\binom{\alpha\ell}{n}
\end{equation*}
and
\begin{equation*}
\bell_{n,k}\Biggl(1, 1-\lambda, (1-\lambda)(1-2\lambda),\dotsc, \prod_{\ell=0}^{n-k}(1-\ell\lambda)\Biggr)
=
\begin{dcases}
(-1)^{k}\frac{\lambda^{n}n!}{k!} \sum_{\ell=0}^k (-1)^{\ell}\binom{k}{\ell}\binom{\ell/\lambda}{n}, & \lambda\ne0\\
S(n,k), & \lambda=0
\end{dcases}
\end{equation*}
for $n\ge k\in\mathbb{N}_0$ and $\alpha,\lambda\in\mathbb{C}$, where $\bell_{n,k}$ is defined by~\eqref{Bell2nd-Dfn-Eq}, the falling factorial $\langle\alpha\rangle_{p}$ is defined by~\eqref{Fall-Factorial-Dfn-Eq}, the second kind Stirling numbers $S(n,k)$ for $n\ge k\in\mathbb{N}_0$ can be analytically generated~\cite[p.~51]{Comtet-Combinatorics-74} by
\begin{equation*}
\frac{(\te^x-1)^k}{k!}=\sum_{n=k}^{\infty} S(n,k)\frac{x^n}{n!}
\end{equation*}
and can be explicitly computed~\cite[p.~204, Theorem~A]{Comtet-Combinatorics-74} by
\begin{equation*}
S(n,k)=
\begin{dcases}
\frac{(-1)^{k}}{k!}\sum_{\ell=0}^k(-1)^{\ell}\binom{k}{\ell}\ell^{n}, & n>k\in\mathbb{N}_0;\\
1, & n=k\in\mathbb{N}_0,
\end{dcases}
\end{equation*}
and extended binomial coefficient $\binom{z}{w}$ is defined by~\eqref{Gen-Coeff-Binom}.
These two identities and those collected in~\cite[Section~1.3 to Section~1.5]{Bell-value-elem-funct.tex} on closed-form formulas for specific values of partial Bell polynomials $\bell_{n,k}$ supply approaches to establish explicit and general formulas of the $m$th derivatives and Maclaurin's series expansions for composite functions $f((a+bx)^\alpha)$, such as $\te^{x^\alpha}$ and $\sin[(a+bx)^\alpha]$, with $\alpha\in\mathbb{R}$, if the $m$th derivatives of the function $f$ can be explicitly or recursively computed for $m\in\mathbb{N}$.
\par
In~\cite[Theorem~1.2]{Tan-Der-App-Thanks.tex}, there are the formulas
\begin{multline}\label{bell-sin-eq}
\bell_{n,k}\biggl(-\sin x,-\cos x,\sin x,\cos x,\dotsc, \cos\biggl[x+\frac{(n-k+1)\pi}{2}\biggr]\biggr)\\
=\frac{(-1)^k\cos^kx}{k!}\sum_{\ell=0}^k\binom{k}{\ell}\frac{(-1)^\ell}{(2\cos x)^\ell}
\sum_{q=0}^\ell\binom{\ell}{q}(2q-\ell)^n \cos\biggl[(2q-\ell)x+\frac{n\pi}2\biggr]
\end{multline}
and
\begin{multline}\label{bell-sin=ans}
\bell_{n,k}\biggl(\cos x,-\sin x,-\cos x,\sin x,\dotsc, \sin\biggl[x+\frac{(n-k+1)\pi}{2}\biggr]\biggr)\\
=\frac{(-1)^k\sin^{k}x}{k!}\sum_{\ell=0}^k\binom{k}{\ell}\frac1{(2\sin x)^{\ell}}
\sum_{q=0}^\ell(-1)^q\binom{\ell}{q}(2q-\ell)^n \cos\biggl[(2q-\ell)x+\frac{(n-\ell)\pi}2\biggr]
\end{multline}
for $n\ge k\in\mathbb{N}$. See also~\cite[Section~1.6]{Bell-value-elem-funct.tex} and closely related references listed therein. These closed-form formulas~\eqref{bell-sin-eq} and~\eqref{bell-sin=ans} provide methods to establish explicit and general formulas of the $m$th derivatives and Maclaurin's series expansions for composite functions $f(\sin x)$ and $f(\cos x)$, such as $\sin^\alpha x$, $\cos^\alpha x$, $\sec^\alpha x$, $\csc^\alpha x$, $\te^{\pm\sin x}$, $\te^{\pm\cos x}$, $\ln\cos x$, $\ln\sin x$, $\ln\sec x$, $\ln\csc x$, $\sin\sin x$, $\cos\sin x$, $\sin\cos x$, $\cos\cos x$, $\tan x$, and $\cot x$ with $\alpha\in\mathbb{R}$, if the $m$th derivatives of the function $f$ can be explicitly or recursively computed for $m\in\mathbb{N}$.
\par
In the paper~\cite{Brychkov-ITSF-2009}, earlier than~\cite{Tan-Der-App-Thanks.tex}, among other things, the $m$th derivatives and Maclaurin's series expansions of the positive integer powers $\sin^nz$, $\cos^nz$, $\tan^nz$, $\cot^nz$, $\sec^nz$, and $\csc^nz$ for $m,n\in\mathbb{N}$ were computed and investigated.
\par
It is not difficult to see that, by virtue of the formulas~\eqref{bell-sin-eq} and~\eqref{bell-sin=ans}, we can deal with explicit and general formulas of the $m$th derivatives and Maclaurin's series expansions of more general functions.
\end{rem}

\begin{rem}
Replacing $\arcsinh t$ by $t$ in Maclaurin's series expansions~\eqref{exp-arcsinh-sereis-exapnsion}, \eqref{arcsinh-series-incomp-1}, \eqref{arcsinh-series-incomp-2}, and~\eqref{arcsinh-series-incomp-3} leads to
\begin{align*}
\te^{t}&=1+\sinh t-(\sinh t)^{2}\sum_{k=0}^{\infty}\binom{\frac{2k-1}{2}}{2k+1}\frac{(2\sinh t)^{2k}}{k+1},\\
\Gamma(2,t)&=1-\frac{1}{2}(\sinh t)^2+\frac{1}{3}(\sinh t)^3-\frac{1}{4}\sum_{k=3}^{\infty} Q(2,k-1;3)\frac{(2\sinh t)^{k+1}}{(k+1)!},\\
\Gamma(3,t)&=2-\frac{1}{3}(\sinh t)^3-\frac{1}{8}\sum_{k=3}^{\infty}Q(3,k-2;3)\frac{(2\sinh t)^{k+1}}{(k+1)!},
\end{align*}
and, for $m\ge3$,
\begin{equation*}
\Gamma(1+m,t) =m!-\frac{m!}{2^{m+1}}\sum_{k=m}^{\infty}Q(m+1,k-m;3) \frac{(2\sinh t)^{k+1}}{(k+1)!},
\end{equation*}
where $Q(m,k;3)$ is defined by~\eqref{Q(m-k)-sum-dfn} and the incomplete gamma function $\Gamma(a,x)$ is defined by~\eqref{incomplete-gamma-dfn}.
\end{rem}

\begin{rem}
In~\cite[pp.~168--169, (901); pp~176--177, (956)]{Jolley-B-1961}, there exist Maclaurin's series expansions
\begin{equation*}
\frac{(\arcsinh\theta)^2}{2!}
=\sum_{k=1}^{\infty}(-1)^{k+1}\frac{[2(k-1)]!!}{(2k-1)!!}\frac{\theta^{2k}}{2k}
=\frac{\theta^2}{2}-\frac{2}{3}\frac{\theta^4}{4}+\frac{2}{3}\frac{4}{5}\frac{\theta^6}{6}-\dotsm
\end{equation*}
and
\begin{equation*}
\frac{\arcsinh\theta}{\sqrt{1+\theta^2}\,}
=\sum_{k=1}^{\infty}(-1)^{k+1}\frac{[2(k-1)]!!}{(2k-1)!!}\theta^{2k-1}
=\theta-\frac{2}{3}\theta^3+\frac{2}{3}\frac{4}{5}\theta^5-\dotsm
\end{equation*}
Comparing the first one with~\eqref{arcsinh-series-expansion} for $m=2$ deduces the identity~\eqref{2n-stirling-n!square} once again.
\end{rem}

\begin{rem}
The series expansion~\eqref{arcsinh-series-expansion} in Theorem~\ref{arcsinh-identity-thm} can be applied to find a closed-form formula for the central factorial numbers of the first kind $t(n,k)$ which can be generated~\cite{centr-bell-polyn.tex} by
\begin{equation*}
\frac{1}{k!}\Bigl(2\arcsinh\frac{x}{2}\Bigr)^k=\sum_{n=k}^{\infty}t(n,k)\frac{x^n}{n!}, \quad |x|\le2.
\end{equation*}
\end{rem}

\begin{rem}
The Fa\`a di Bruno formula~\cite[Theorem~11.4]{Charalambides-book-2002} and~\cite[p.~139, Theorem~C]{Comtet-Combinatorics-74} can be described in terms of partial Bell polynomials $\bell_{n,k}(x_1,x_2,\dotsc,x_{n-k+1})$ by
\begin{equation}\label{Bruno-Bell-Polynomial}
\frac{\td^n}{\td x^n}f\circ h(x)=\sum_{k=0}^nf^{(k)}(h(x)) \bell_{n,k}\bigl(h'(x),h''(x),\dotsc,h^{(n-k+1)}(x)\bigr)
\end{equation}
for $n\in\mathbb{N}_0$.
It is easy to see that
\begin{equation*}
(\arctan t)^n=\sum_{k=0}^{\infty}\biggl[\lim_{t\to0}\frac{\td^k(\arctan t)^n}{\td t^k}\biggr]\frac{t^k}{k!}
\end{equation*}
and, by employing~\eqref{Bruno-Bell-Polynomial} and considering $u=u(t)=\arctan t\to0$ as $t\to0$,
\begin{align*}
\lim_{t\to0}\frac{\td^k(\arctan t)^n}{\td t^k}
&=\lim_{t\to0}\sum_{\ell=0}^{k}\frac{\td^\ell u^n}{\td u^\ell} \bell_{k,\ell}\Biggl(\frac{1}{1+t^2}, \biggl(\frac{1}{1+t^2}\biggr)',\dotsc, \biggl(\frac{1}{1+t^2}\biggr)^{(k-\ell)}\Biggr)\\
&=\sum_{\ell=0}^{k} \lim_{u\to0} \bigl(\langle n\rangle_\ell u^{n-\ell}\bigr) \lim_{t\to0}\bell_{k,\ell}\Biggl(\frac{1}{1+t^2}, \biggl(\frac{1}{1+t^2}\biggr)',\dotsc, \biggl(\frac{1}{1+t^2}\biggr)^{(k-\ell)}\Biggr)\\
&=n!\bell_{k,n}\Biggl(\frac{1}{1+t^2}\bigg|_{t=0}, \biggl(\frac{1}{1+t^2}\biggr)'\bigg|_{t=0}, \biggl(\frac{1}{1+t^2}\biggr)''\bigg|_{t=0},\dotsc, \biggl(\frac{1}{1+t^2}\biggr)^{(k-n)}\bigg|_{t=0}\Biggr)
\end{align*}
with the convention $\bell_{k,n}=0$ for $n>k$, while, by virtue of~\eqref{Bruno-Bell-Polynomial} and for $\ell\in\mathbb{N}_0$,
\begin{align*}
\biggl(\frac{1}{1+t^2}\biggr)^{(\ell)}\bigg|_{t=0}
&=\sum_{q=0}^{\ell}\frac{\td^q}{\td v^q}\biggl(\frac{1}{1+v}\biggr) \bell_{\ell,q}(2t,2,0,\dotsc,0)\\
&=\lim_{t\to0}\sum_{q=0}^{\ell}\frac{(-1)^qq!}{(1+v)^{q+1}} 2^q\frac{1}{2^{\ell-q}} \frac{\ell!}{q!}\binom{q}{\ell-q}t^{2q-\ell}\\
&=\ell!\lim_{t\to0}\sum_{q=0}^{\ell}(-1)^q \binom{q}{\ell-q}(2t)^{2q-\ell}\\
&=
\begin{dcases}
(-1)^p(2p)!,&\ell=2p\\
0,&\ell=2p+1
\end{dcases}\\
&=\frac{1+(-1)^\ell}{2}(-1)^{\ell/2}\ell!
\end{align*}
for $p\in\mathbb{N}_0$, where we used the substitution $v=v(t)=t^2\to0$ as $t\to0$, the identity
\begin{equation}\label{Bell(n-k)}
\bell_{n,k}\bigl(abx_1,ab^2x_2,\dotsc,ab^{n-k+1}x_{n-k+1}\bigr) =a^kb^n\bell_{n,k}(x_1,x_2,\dotsc,x_{n-k+1})
\end{equation}
for $n\ge k\in\mathbb{N}_0$ and $a,b\in\mathbb{C}$, which can be found in~\cite[p.~412]{Charalambides-book-2002} and~\cite[p.~135]{Comtet-Combinatorics-74}, and the explicit formula
\begin{equation}\label{Bell-x-1-0-eq}
\bell_{n,k}(x,1,0,\dotsc,0)
=\frac{1}{2^{n-k}}\frac{n!}{k!}\binom{k}{n-k}x^{2k-n}
\end{equation}
in~\cite[Theorem~5.1]{Spec-Bell2Euler-S.tex}, \cite[Section~3]{Deriv-Arcs-Cos.tex}, and~\cite[Section~1.4]{Bell-value-elem-funct.tex}, with conventions that $\binom{0}{0}=1$ and $\binom{p}{q}=0$ for $q>p\in\mathbb{N}_0$. Accordingly, we acquire
\begin{align*}
\frac{(\arctan t)^n}{n!}&=\sum_{k=n}^{\infty} \bell_{k,n}\biggl(0!, 0, -2!,0, 4!, 0,-6!\dotsc, \frac{1+(-1)^{k-n}}{2}(-1)^{(k-n)/2}(k-n)!\biggr)\frac{t^k}{k!}\\
&=\sum_{k=0}^{\infty}\bell_{k+n,n}\biggl(0!, 0, -2!,0, 4!, 0,-6!\dotsc, \frac{1+(-1)^{k}}{2}(-1)^{k/2}k!\biggr)\frac{t^{k+n}}{(k+n)!}.
\end{align*}
Comparing this with the guess in~\eqref{arctan-power-series-expansion-gen}, or equivalently with the series expansion~\eqref{Schwatt-Phil.Mag.1916-series}, and equating coefficients of the terms $\frac{t^{k+n}}{k+n}$ yield
\begin{equation}\label{Bell-Arctan-2k}
\bell_{2k+n,n}\bigl(0!, 0, -2!,0, 4!, 0,-6!\dotsc, (-1)^{k}(2k)!\bigr)
=(-1)^{k}\prod_{m=1}^{n-1}\sum_{\ell_m=0}^{\ell_{m+1}} \frac{1}{2\ell_{m}+m}
\end{equation}
and
\begin{equation}\label{Bell-Arctan-2k-1}
\bell_{2k+n-1,n}\bigl(0!, 0, -2!,0, 4!, 0,-6!\dotsc, (-1)^{k-1}(2k-2)!,0\bigr)=0
\end{equation}
for $k,n\in\mathbb{N}$. Since
\begin{align*}
&\quad\bell_{k,n}\bigl((\arctan t)'\big|_{t=0}, (\arctan t)''\big|_{t=0}, (\arctan t)^{(3)}\big|_{t=0},\dotsc, (\arctan t)^{(k-n+1)}\big|_{t=0}\bigr)\\
&=\bell_{k,n}\Biggl(\frac{1}{1+t^2}\bigg|_{t=0}, \biggl(\frac{1}{1+t^2}\biggr)'\bigg|_{t=0}, \biggl(\frac{1}{1+t^2}\biggr)''\bigg|_{t=0},\dotsc, \biggl(\frac{1}{1+t^2}\biggr)^{(k-n)}\bigg|_{t=0}\Biggr)\\
&=\bell_{k,n}\biggl(0!, 0, -2!,0, 4!, 0,-6!\dotsc, \frac{1+(-1)^{k-n}}{2}(-1)^{(k-n)/2}(k-n)!\biggr),
\end{align*}
the identities~\eqref{Bell-Arctan-2k} and~\eqref{Bell-Arctan-2k-1} can be applied to establish Maclaurin's series expansions for composite functions $f(\arctan t)$, if the $m$th derivatives of the function $f$ can be explicitly or recursively computed for $m\in\mathbb{N}$.
\end{rem}

\begin{rem}
It is obvious that
\begin{equation*}
(\arctanh t)^n=\sum_{k=0}^{\infty}\biggl[\lim_{t\to0}\frac{\td^k(\arctanh t)^n}{\td t^k}\biggr]\frac{t^k}{k!}
\end{equation*}
and, by employing~\eqref{Bruno-Bell-Polynomial} and considering $u=u(t)=\arctanh t\to0$ as $t\to0$,
\begin{align*}
\lim_{t\to0}\frac{\td^k(\arctanh t)^n}{\td t^k}
&=\lim_{t\to0}\sum_{\ell=0}^{k}\frac{\td^\ell u^n}{\td u^\ell} \bell_{k,\ell}\Biggl(\frac{1}{1-t^2}, \biggl(\frac{1}{1-t^2}\biggr)',\dotsc, \biggl(\frac{1}{1-t^2}\biggr)^{(k-\ell)}\Biggr)\\
&=\sum_{\ell=0}^{k} \lim_{u\to0} \bigl(\langle n\rangle_\ell u^{n-\ell}\bigr) \lim_{t\to0}\bell_{k,\ell}\Biggl(\frac{1}{1-t^2}, \biggl(\frac{1}{1-t^2}\biggr)',\dotsc, \biggl(\frac{1}{1-t^2}\biggr)^{(k-\ell)}\Biggr)\\
&=n!\bell_{k,n}\Biggl(\frac{1}{1-t^2}\bigg|_{t=0}, \biggl(\frac{1}{1-t^2}\biggr)'\bigg|_{t=0}, \biggl(\frac{1}{1-t^2}\biggr)''\bigg|_{t=0},\dotsc, \biggl(\frac{1}{1-t^2}\biggr)^{(k-n)}\bigg|_{t=0}\Biggr)
\end{align*}
with the convention $\bell_{k,n}=0$ for $n>k$, while, by virtue of~\eqref{Bruno-Bell-Polynomial} and for $\ell\in\mathbb{N}_0$,
\begin{align*}
\biggl(\frac{1}{1-t^2}\biggr)^{(\ell)}\bigg|_{t=0}
&=\sum_{q=0}^{\ell}\frac{\td^q}{\td v^q}\biggl(\frac{1}{1-v}\biggr) \bell_{\ell,q}(2t,2,0,\dotsc,0)\\
&=\lim_{t\to0}\sum_{q=0}^{\ell}\frac{q!}{(1-v)^{q+1}} 2^q\frac{1}{2^{\ell-q}} \frac{\ell!}{q!}\binom{q}{\ell-q}t^{2q-\ell}\\
&=\ell!\lim_{t\to0}\sum_{q=0}^{\ell}\binom{q}{\ell-q}(2t)^{2q-\ell}\\
&=
\begin{dcases}
(2p)!,&\ell=2p\\
0,&\ell=2p+1
\end{dcases}\\
&=\frac{1+(-1)^\ell}{2}\ell!
\end{align*}
for $p\in\mathbb{N}_0$, where we used the substitution $v=v(t)=t^2\to0$ as $t\to0$, the identity~\eqref{Bell(n-k)}, and the explicit formula~\eqref{Bell-x-1-0-eq}. Accordingly, we acquire
\begin{align*}
\frac{(\arctanh t)^n}{n!}&=\sum_{k=n}^{\infty}\bell_{k,n}\biggl(0!, 0, 2!,0, 4!, 0, 6!\dotsc, \frac{1+(-1)^{k-n}}{2}(k-n)!\biggr)\frac{t^k}{k!}\\
&=\sum_{k=0}^{\infty}\bell_{k+n,n}\biggl(0!, 0, 2!,0, 4!, 0,6!\dotsc, \frac{1+(-1)^{k}}{2}k!\biggr)\frac{t^{k+n}}{(k+n)!}.
\end{align*}
Comparing this with the verified guess in~\eqref{arctanh-power-series-expansion-gen} and equating coefficients of the terms $\frac{t^{k+n}}{k+n}$ yield
\begin{equation}\label{Bell-Arctanh-2k}
\bell_{2k+n,n}\bigl(0!, 0,2!,0, 4!, 0,6!\dotsc,(2k)!\bigr)
=\prod_{m=1}^{n-1}\sum_{\ell_m=0}^{\ell_{m+1}} \frac{1}{2\ell_{m}+m}
\end{equation}
and
\begin{equation}\label{Bell-Arctanh-2k-1}
\bell_{2k+n-1,n}\bigl(0!, 0, 2!,0, 4!, 0, 6!\dotsc,(2k-2)!,0\bigr)=0
\end{equation}
for $k,n\in\mathbb{N}$. Since
\begin{align*}
&\quad\bell_{k,n}\bigl((\arctanh t)'\big|_{t=0}, (\arctanh t)''\big|_{t=0}, (\arctanh t)^{(3)}\big|_{t=0},\dotsc, (\arctanh t)^{(k-n+1)}\big|_{t=0}\bigr)\\
&=\bell_{k,n}\Biggl(\frac{1}{1-t^2}\bigg|_{t=0}, \biggl(\frac{1}{1-t^2}\biggr)'\bigg|_{t=0}, \biggl(\frac{1}{1-t^2}\biggr)''\bigg|_{t=0},\dotsc, \biggl(\frac{1}{1-t^2}\biggr)^{(k-n)}\bigg|_{t=0}\Biggr)\\
&=\bell_{k,n}\biggl(0!, 0, 2!,0, 4!, 0,6!\dotsc, \frac{1+(-1)^{k-n}}{2}(k-n)!\biggr),
\end{align*}
the identities~\eqref{Bell-Arctanh-2k} and~\eqref{Bell-Arctanh-2k-1} can be applied to establish Maclaurin's series expansions for composite functions $f(\arctanh t)$, if the $m$th derivatives of the function $f$ can be explicitly or recursively computed for $m\in\mathbb{N}$.
\end{rem}

\begin{rem}
Can one find a simpler expression with less multiplicity of sums for the quantity
\begin{equation*}
\prod_{m=1}^{n-1}\sum_{\ell_m=0}^{\ell_{m+1}} \frac{1}{2\ell_{m}+m}
=\sum_{\ell_{n-1}=0}^{k}\frac{1}{2\ell_{n-1}+n-1} \sum_{\ell_{n-2}=0}^{\ell_{n-1}}\frac{1}{2\ell_{n-2}+n-2} \dotsm\sum_{\ell_{3}=0}^{\ell_4}\frac{1}{2\ell_{3}+3} \sum_{\ell_2=0}^{\ell_3}\frac{1}{2\ell_2+2}\sum_{\ell_1=0}^{\ell_2} \frac{1}{2\ell_1+1}
\end{equation*}
in the brackets of Maclaurin's series expansions~\eqref{arctan-power-series-expansion-gen} and~\eqref{arctanh-power-series-expansion-gen}?
\end{rem}

\begin{rem}
The formula~\eqref{JO(833)} is a variant of
\begin{equation*}
1+\sum_{k=1}^{\infty}\bigl[1+(k-1)^2\bigr]\frac{\sin^k\theta}{k!}
=\frac{\te^\theta}{\cos\theta}
\end{equation*}
and
\begin{equation*}
1+a\theta+\sum_{k=2}^{\infty}a^{\frac{1-(-1)^k}{2}}\bigl[a^2+(k-1)^2\bigr]\frac{\theta^k}{k!}
=\frac{\te^{a\arcsin\theta}}{\sqrt{1-\theta^2}\,}
\end{equation*}
in~\cite[p.~79]{Edwards-B-1899} and~\cite[pp.~118--119, (642); pp.~154--155, (833); pp.~156--157, (839)]{Jolley-B-1961} respectively. For more information, please refer to~\cite[pp.~262--263, Proposition~15]{Berndt-Ramanujan-B-I}, \cite[p.~3]{Borwein-Chamberland-IJMMS-2007}, \cite[p.~308]{Davydychev-Kalmykov-2001}, \cite[Remark~5.3]{AIMS-Math20210491.tex}, and~\cite[pp.~49--50]{Kalmykov-Sheplyakov-lsjk-2005}.
\end{rem}

\begin{rem}
Now we quote some texts in~\cite[pp.~124--125]{Schwatt-B-2nd1924} as follows.
\begin{quote}
Expanding $\sin(tx)$ and $\cos(tx)$ in powers of $\sin x$, we have
\begin{equation*}
\sin(tx)=t\sum_{n=0}^{\infty}(-1)^n\prod_{k=1}^{n}\bigl[t^2-(2k-1)^2\bigr]\frac{\sin^{2n+1}x}{(2n+1)!}
\end{equation*}
and
\begin{equation*}
\cos(tx)=\sum_{n=0}^{\infty}(-1)^n\prod_{k=0}^{n-1}\bigl[t^2-(2k)^2\bigr]\frac{\sin^{2n}x}{(2n)!}
\end{equation*}
for $|x|<\frac{\pi}{2}$ and all values of $t$. But
\begin{equation*}
\sin(tx)=\sum_{n=0}^{\infty}(-1)^n\frac{(tx)^{2n+1}}{(2n+1)!}
\end{equation*}
and
\begin{equation*}
\cos(tx)=\sum_{n=0}^{\infty}(-1)^n\frac{(tx)^{2n}}{(2n)!}.
\end{equation*}
\end{quote}
These texts recited from~\cite[pp.~124--125]{Schwatt-B-2nd1924} are equivalent to the equality
\begin{equation}\label{exp-arsin-b(t)0series}
\te^{t\arcsin x}=\sum_{\ell=0}^{\infty}\frac{b_\ell(t)x^\ell}{\ell!}
\end{equation}
used in~\cite[pp.~262--263, Proposition~15]{Berndt-Ramanujan-B-I}, \cite[p.~3]{Borwein-Chamberland-IJMMS-2007}, \cite[p.~308]{Davydychev-Kalmykov-2001}, and~\cite[pp.~49--50]{Kalmykov-Sheplyakov-lsjk-2005}, where $b_0(t)=1$, $b_1(t)=t$, and
\begin{equation*}
b_{2\ell}(t)=\prod_{k=0}^{\ell-1}\bigl[t^2+(2k)^2\bigr], \quad b_{2\ell+1}(t)=t\prod_{k=1}^{\ell}\bigl[t^2+(2k-1)^2\bigr]
\end{equation*}
for $\ell\in\mathbb{N}$. The equality~\eqref{exp-arsin-b(t)0series} has also been applied in Section~2 in the paper~\cite{AIMS-Math20210491.tex}.
\par
In~\cite[Lemmas~3.1 and~3.2]{Taylor-arccos-v2.tex}, the quantities $b_{2\ell}(t)$ and $b_{2\ell+1}(t)$ are expanded as finite sums in terms of the first kind Stirling numbers $s(n,k)$.
\end{rem}

\begin{rem}
In~\cite[p.~124]{Schwatt-B-2nd1924}, there are Maclaurin's series expansions
\begin{equation*}
(\arctan x)^p=\sum_{n=0}^{\infty}(-1)^nx^{2n+p} \prod_{k=1}^{p-1} \Biggl(\sum_{n_k=0}^{n_{k-1}} \frac{1}{2n_{k-1}-2n_k+1}\Biggr) \frac{1}{2n_{p-1}+1}
\end{equation*}
and
\begin{align*}
(\arcsin x)^p&=\sum_{n=0}^{\infty}x^{2n+p} \prod_{k=1}^{p-1}\Biggl[\sum_{n_k=0}^{n_{k-1}} \frac{1}{2^{2(n_{k-1}-n_k)}(2n_{k-1}-2n_k+1)}\\
&\quad\times\binom{2n_{k-1}-n_k}{n_{k-1}-n_k} \frac{1}{2^{2n_{p-1}}(2n_{p-1}+1)}\binom{2n_{p-1}}{n_{p-1}}\Biggr],
\end{align*}
where $n_0=n$ and $p\in\mathbb{N}$. These two Maclaurin's series expansions are not more beautiful than~\eqref{arctan-power-series-expansion-gen} and~\eqref{arcsin-series-expansion-unify} respectively.
\par
In~\cite[p.~128]{Schwatt-B-2nd1924}, there is Maclaurin's series expansion
\begin{align*}
(\arcsec x)^p&=(-1)^p\sum_{n=0}^{\infty}\frac{1}{x^{2n+p}}\prod_{k=1}^{p-1}\Biggl[\sum_{n_k=0}^{n_{k-1}} \frac{1}{2^{2(n_{k-1}-n_k)} (2n_{k-1}-2n_k+1)}\\
&\quad\times\binom{2n_{k-1}-n_k}{n_{k-1}-n_k} \frac{1}{2^{2n_{p-1}}(2n_{p-1}+1)}\binom{2n_{p-1}}{n_{p-1}}\Biggr],
\end{align*}
where $n_0=n$ and $p\in\mathbb{N}$.
\end{rem}

\begin{rem}
The identity~\eqref{1st-stirling-2k+1} is a special of the known identity~\eqref{(12.1)-Quaintance}.
\par
The identities~\eqref{1st-stirling-2k+1} and~\eqref{3rd-comb-id} in Corollary~\ref{arcsin-deriv-comb-id-cor} are also proved in the proof of Theorem~\ref{arcsinh-identity-thm}.
\par
The quantity $Q(m,k;\alpha)$ in~\eqref{Q(m-k)-sum-dfn} can be equivalently reformulated as
\begin{equation}\label{sum-(12.1)-Quaintance}
\sum _{\ell=0}^k \binom{m+\ell}{m} s(m+k,m+\ell)z^\ell, \quad k,m\in\mathbb{N}_0, \quad z\ne0.
\end{equation}
The finite sum on the right hand side of the identity~\eqref{(12.1)-Quaintance} is a special case $m=0$ of the finite sum in~\eqref{sum-(12.1)-Quaintance}.
\par
When replacing $k$ by $2k-1$ and taking $z=\frac{2k+m-2}{2}$ in~\eqref{sum-(12.1)-Quaintance}, we derive the finite sum on the right hand side of the identity~\eqref{3rd-comb-id}.
The quantity in the bracket on the right hand side of Maclaurin's series expansion~\eqref{arcsin-series-expansion-unify} is also a special case of the finite sum~\eqref{sum-(12.1)-Quaintance}. The identity~\eqref{2n-stirling-n!square} gives a sum of~\eqref{sum-(12.1)-Quaintance} for taking $m=1$ and $z=k$ and for replacing $k$ by $2k$.
\par
Does there exist a simpler and general expression for the sum~\eqref{sum-(12.1)-Quaintance}, or say, for the quantity $Q(m,k;\alpha)$ in~\eqref{Q(m-k)-sum-dfn}? If yes, Maclaurin's series expansions~\eqref{arcsin-series-expansion-unify} and~\eqref{arcsin-diiff-series} in Theorem~\ref{arcsin-series-expansion-unify-thm} and Corollary~\ref{arcsin-diiff-series-cor}, the closed-form formula~\eqref{Bell-Oertel-closed-Eq} in Theorem~\ref{Bell-Oertel-closed-thm}, the series representation~\eqref{logsine-series-expnsion-represent} in Theorem~\ref{logsine-series-expnsion-thm}, Maclaurin's series expansions~\eqref{arcsinh-series-expansion} in Theorem~\ref{arcsinh-identity-thm}, the series identities~\eqref{arcsinh-series-id-1}, \eqref{arcsinh-series-id-2}, and~\eqref{arcsinh-series-id-3} in Theorem~\ref{arcsinh-identity-thm}, and Maclaurin's series expansions~\eqref{arcsinh-series-incomp-1}, \eqref{arcsinh-series-incomp-2}, and~\eqref{arcsinh-series-incomp-3} in Corollary~\ref{arcsinh-idty-cor} would be further simplified.
\end{rem}

\begin{rem}
It is common knowledge that
\begin{equation*}
\arcsin t+\arccos t=\frac{\pi}{2}, \quad |t|<1.
\end{equation*}
This means that
\begin{align*}
(\arccos t)^m&=\biggl(\frac{\pi}{2}-\arcsin t\biggr)^m\\
&=\sum_{q=0}^{m}(-1)^q\binom{m}{q}\biggl(\frac{\pi}{2}\biggr)^{m-q}(\arcsin t)^q\\
&=\biggl(\frac{\pi}{2}\biggr)^{m}+\sum_{q=1}^{m}(-1)^q\binom{m}{q}\biggl(\frac{\pi}{2}\biggr)^{m-q} t^q\Biggl[1+\sum_{k=1}^{\infty} \frac{(-1)^k}{\binom{q+2k}{q}} Q(q,2k;2) \frac{(2t)^{2k}}{(2k)!}\Biggr]\\
&=\biggl(\frac{\pi}{2}\biggr)^{m}+\sum_{q=1}^{m}(-1)^q\binom{m}{q}\biggl(\frac{\pi}{2}\biggr)^{m-q} t^q\\
&\quad+\sum_{q=1}^{m}\sum_{k=1}^{\infty} \binom{m}{q}\biggl(\frac{\pi}{2}\biggr)^{m-q}\frac{(-1)^{q+k}}{\binom{q+2k}{q}} Q(q,2k;2) \frac{2^{2k}t^{q+2k}}{(2k)!}\\
&=\biggl(\frac{\pi}{2}\biggr)^{m}+\sum_{q=1}^{m}(-1)^q\binom{m}{q}\biggl(\frac{\pi}{2}\biggr)^{m-q} t^q\\
&\quad+\sum_{k=1}^{\infty} \sum_{q=1}^{m}\binom{m}{q}\biggl(\frac{\pi}{2}\biggr)^{m-q}\frac{(-1)^{q+k}}{\binom{q+2k}{q}}Q(q,2k;2) \frac{2^{2k}t^{q+2k}}{(2k)!}\\
&=\biggl(\frac{\pi}{2}\biggr)^{m}+\sum_{q=1}^{m}(-1)^q\binom{m}{q}\biggl(\frac{\pi}{2}\biggr)^{m-q} t^q\\
&\quad+\sum_{k=1}^{\infty}(-4)^k \sum_{q=1}^{m} (-1)^{q}q!\binom{m}{q} \biggl(\frac{\pi}{2}\biggr)^{m-q} Q(q,2k;2) \frac{t^{q+2k}}{(q+2k)!}\\
&=\biggl(\frac{\pi}{2}\biggr)^{m}+\sum_{q=1}^{m}(-1)^q\binom{m}{q}\biggl(\frac{\pi}{2}\biggr)^{m-q} t^q\\
&\quad+\sum_{p=3}^{\infty} \Biggl[\sum_{q+2k=p}^{q,k\in\mathbb{N}}(-4)^k(-1)^{q}q!\binom{m}{q} \biggl(\frac{\pi}{2}\biggr)^{m-q}Q(q,p-q;2)\Biggr]\frac{t^{p}}{p!}
\end{align*}
for $|t|<1$, that is,
\begin{equation}
\begin{aligned}\label{arccos-ser-exp-ugly}
(\arccos t)^m&=\biggl(\frac{\pi}{2}\biggr)^{m}+\sum_{q=1}^{m}(-1)^q\binom{m}{q}\biggl(\frac{\pi}{2}\biggr)^{m-q} t^q\\
&\quad+\sum_{p=3}^{\infty} \Biggl[\sum_{k=1}^{\infty} (-4)^k\sum_{q=1}^{p-2k}(-1)^{q}q!\binom{m}{q} \biggl(\frac{\pi}{2}\biggr)^{m-q}Q(q,p-q;2)\Biggr]\frac{t^{p}}{p!}
\end{aligned}
\end{equation}
for $|t|<1$, where we used the power series expansion~\eqref{arcsin-series-expansion-unify} in Theorem~\ref{arcsin-series-expansion-unify}, used the convention $\binom{u}{v}=0$ for $u<v$, and understood the sum, if the starting index exceeds the finishing index, to be zero.
\par
Substituting the relation $\arccos t=-\ti\arccosh t$ into~\eqref{arccos-ser-exp-ugly}, we can derive Maclaurin's series expansion of the inverse hyperbolic cosine $\arccosh t$.
\par
In the papers~\cite{Maclaurin-series-arccos-v3.tex, Taylor-arccos-v2.tex}, we will further discover nicer and more beautiful Maclaurin's and Taylor's series expansions of functions related to $(\arccos t)^\alpha$ and $(\arcsin t)^\alpha$ for $\alpha\in\mathbb{R}$.
\end{rem}

\begin{rem}
Since the relation
\begin{equation}\label{arcsinh=arcsin-rel}
\arcsinh t=-\ti\arcsin(\ti t)
\end{equation}
or, equivalently,
\begin{equation}\label{arcsinh=arcsin-rel-equiv}
\arcsin t=-\ti\arcsinh(\ti t),
\end{equation}
the series expansions~\eqref{arcsin-series-expansion-unify} and~\eqref{arcsinh-series-expansion} are equivalent to each other.
\par
Applying the relation~\eqref{arcsinh=arcsin-rel-equiv} into the series expansion~\eqref{arcsin-diiff-series} in Corollary~\ref{arcsin-diiff-series-cor} results in the series expansion
\begin{equation*}
\frac{(\arcsinh t)^{m}}{\sqrt{1+t^2}\,}
=t^m\Biggl[1+\sum_{k=1}^{\infty} \frac{1}{\binom{m+2k}{m}} Q(m+1,2k;2) \frac{(2t)^{2k}}{(2k)!}\Biggr]
\end{equation*}
for $m\in\mathbb{N}_0$ and $|t|<1$, where $Q(m+1,2k;2)$ is defined by~\eqref{Q(m-k)-sum-dfn}.
\par
Considering the relation~\eqref{arcsinh=arcsin-rel} in~\eqref{arcsinh-series-id-1}, \eqref{arcsinh-series-id-2}, \eqref{arcsinh-series-id-3}, and~\eqref{exp-arcsinh-sereis-exapnsion} in Theorem~\ref{arcsinh-identity-thm} deduces
\begin{align*}
\sum_{\ell=0}^{\infty}(-1)^{\ell}(\ell+1)\frac{[-\ti\arcsin(\ti t)]^{\ell+2}}{(\ell+2)!}
&=\frac{1}{2}t^2-\frac{1}{3}t^3+\frac{1}{4}\sum_{k=3}^{\infty} Q(2,k-1;3)\frac{(2t)^{k+1}}{(k+1)!},\\
\sum_{\ell=0}^{\infty}(-1)^{\ell}(\ell+1)(\ell+2)\frac{[-\ti\arcsin(\ti t)]^{\ell+3}}{(\ell+3)!} &=\frac{1}{3}t^3+\frac{1}{8}\sum_{k=3}^{\infty}Q(3,k-2;3)\frac{(2t)^{k+1}}{(k+1)!},\\
\sum_{\ell=0}^{\infty}(-1)^{\ell}\binom{\ell+m}{m}\frac{[-\ti\arcsin(\ti t)]^{\ell+m+1}}{(\ell+m+1)!} &=\frac{1}{2^{m+1}}\sum_{k=m}^{\infty}Q(m+1,k-m;3) \frac{(2t)^{k+1}}{(k+1)!}
\end{align*}
for $m\ge3$, and
\begin{equation*}
\te^{-\ti\arcsin(\ti t)}=1+t-t^2\sum_{k=0}^{\infty}\binom{\frac{2k-1}{2}}{2k+1}\frac{(2t)^{2k}}{k+1},
\end{equation*}
where $Q(m+1,k-m;3)$ is defined by~\eqref{Q(m-k)-sum-dfn}.
\par
Considering the relation~\eqref{arcsinh=arcsin-rel} in~\eqref{arcsinh-series-incomp-1}, \eqref{arcsinh-series-incomp-2}, and~\eqref{arcsinh-series-incomp-3} in Corollary~\ref{arcsinh-idty-cor} deduces
\begin{align*}
\Gamma(2,-\ti\arcsin(\ti t))
&=1-\frac{1}{2}t^2+\frac{1}{3}t^3-\frac{1}{4}\sum_{k=3}^{\infty} Q(2,k-1;3)\frac{(2t)^{k+1}}{(k+1)!},\\
\Gamma(3,-\ti\arcsin(\ti t))
&=2-\frac{1}{3}t^3-\frac{1}{8}\sum_{k=3}^{\infty}Q(3,k-2;3)\frac{(2t)^{k+1}}{(k+1)!},
\end{align*}
and, for $m\ge3$,
\begin{equation*}
\Gamma(1+m,-\ti\arcsin(\ti t)) =m!-\frac{m!}{2^{m+1}}\sum_{k=m}^{\infty}Q(m+1,k-m;3) \frac{(2t)^{k+1}}{(k+1)!}.
\end{equation*}
where $Q(m+1,k-m;3)$ is given by~\eqref{Q(m-k)-sum-dfn} and the incomplete gamma function $\Gamma(a,x)$ is given by~\eqref{incomplete-gamma-dfn}.
\end{rem}

\begin{rem}
All of Maclaurin's series expansions of positive integer powers of the inverse (hyperbolic) trigonometric functions in this paper can be used to derive infinite series representations of positive integer powers of the circular constant $\pi$. For example, taking $t=\frac{1}{2}$ in~\eqref{arcsin-series-expansion-unify} and simplifying result in
\begin{equation*}
\biggl(\frac{\pi}{3}\biggr)^{m}
=1+m!\sum_{k=1}^{\infty} (-1)^k\frac{Q(m,2k;2)}{(m+2k)!}, \quad m\in\mathbb{N}.
\end{equation*}
\end{rem}

\begin{rem}
This paper is a revised version of the preprint~\cite{Ser-Pow-Arcs-Arctan-Simp.tex}, a continuation of the paper~\cite{AIMS-Math20210491.tex}, and a companion of the articles~\cite{Maclaurin-series-arccos-v3.tex, Taylor-arccos-v2.tex, Wilf-Ward-2010P.tex}.
\end{rem}

\section{Declarations}

\begin{description}
\item[Acknowledgements]
The authors thank
\begin{enumerate}
\item
Mr. Chao-Ping Chen (Henan Polytechnic University, China; chenchaoping@sohu.com) for his asking the combinatorial identity in~\cite[Theorem~2.1]{Qi-Chen-Lim-RNA.tex} via Tencent QQ on 18 December 2020. Since then, we communicated and discussed with each other many times.
\item
Mr. Mikhail Yu. Kalmykov (Bogoliubov Laboratory of Theoretical Physics, Joint Institute for Nuclear Research, Russia; kalmykov.mikhail@googlemail.com) for his providing the references~\cite{Davydychev-Kalmykov-2001, Kalmykov-Sheplyakov-lsjk-2005, Lewin-B-1981} on 9 and 27 January 2021. We communicated and discussed with each other many times.
\item
Mr. Frank Oertel (Philosophy, Logic \& Scientific Method Centre for Philosophy of Natural and Social Sciences, London School of Economics and Political Science, UK; f.oertel@email.de) for his citing the paper~\cite{Bell-value-elem-funct.tex} and sending the paper~\cite{Borwein-Chamberland-IJMMS-2007} to the authors and others. We communicated and discussed with each other many times.
\item
Mr. Fr\'ed\'eric Ouimet (California Institute of Technology, USA; McGill Univrsity, Canada; ouimetfr@caltech.edu, frederic.ouimet2@mcgill.ca) for his photocopying by Caltech Library Services and transferring via ResearchGate those two pages containing the formula~\eqref{arcsin-pochhammer} on 2 February 2021.
\item
Mr. Christophe Vignat (Department of Physics, Universite d'Orsay, France; Department of Mathematics, Tulane University, USA; cvignat@tulane.edu) for his sending electronic version of those pages containing the formulas~\eqref{arcsin-pochhammer} and~\eqref{extended-Pochhammer-dfn} in~\cite{Hansen-B-1975, Schwatt-B-2nd1924} on 30 January 2021 and for his sending electronic version of the monograph~\cite{Jolley-B-1961} on 8 February 2021.
\item
Mr. Fei Wang (Department of Mathematics, Zhejiang Institute of Mechanical and Electrical Engineering, China; wf509529@163.com) for his frequent communications and helpful discussions with the authors via Tencent QQ online.
\item
Mr. Li Yin (Binzhou University, China; yinli7979@163.com) for his frequent communications and helpful discussions with the authors via Tencent QQ online.
\end{enumerate}

\item[Availability of data and material]
Data sharing is not applicable to this article as no new data were created or analyzed in this study.

\item[Competing interests]
The authors declare that they have no conflict of competing interests.

\item[Funding]
The author Dongkyu Lim was partially supported by the National Research Foundation of Korea under Grant NRF-2021R1C1C1010902, Republic of Korea.

\item[Authors' contributions]
All authors contributed equally to the manuscript and read and approved the final manuscript.
\end{description}


\begin{thebibliography}{99}

\bibitem{Adams-Hippisley-Smithsonian1922}
E. P. Adams and R. L. Hippisley, \emph{Smithsonian Mathematical Formulae and Tables of Elliptic Functions}, Smithsonian Institute, Washington, D.C., 1922.

\bibitem{Berndt-Ramanujan-B-I}
B. C. Berndt, \emph{Ramanujan's Notebooks, Part I}, With a foreword by S. Chandrasekhar, Springer-Verlag, New York, 1985; available online at \url{https://doi.org/10.1007/978-1-4612-1088-7}.

\bibitem{Borwein-Bailey-Girgensohn-Experim-2004}
J. M. Borwein, D. H. Bailey, and R. Girgensohn, \emph{Experimentation in Mathematics: Computational Paths to Discovery}, A K Peters, Ltd., Natick, MA, 2004.

\bibitem{Borwein-2-book-87}
J. M. Borwein and P. B. Borwein, \emph{Pi and the AGM: A Study in Analytic Number Theory and Computational Complexity}, Canadian Mathematical Society Series of Monographs and Advanced Texts, A Wiley-Interscience Publication, John Wiley \& Sons, Inc., New York, 1987.

\bibitem{Borwein-Chamberland-IJMMS-2007}
J. M. Borwein and M. Chamberland, \emph{Integer powers of arcsin}, Int. J. Math. Math. Sci. \textbf{2007}, Art. ID~19381, 10~pages; available online at \url{https://doi.org/10.1155/2007/19381}.

\bibitem{Bromwich-1908}
T. J. I. Bromwich, \textit{An Introduction to the Theory of Infinite Series}, Macmillan and Co., Limited, London, 1908.

\bibitem{Brychkov-ITSF-2009}
Yu. A. Brychkov, \emph{Power expansions of powers of trigonometric functions and series containing Bernoulli and Euler polynomials}, Integral Transforms Spec. Funct. \textbf{20} (2009), no.~11-12, 797\nobreakdash--804; available online at \url{https://doi.org/10.1080/10652460902867718}.

\bibitem{Charalambides-book-2002}
C. A. Charalambides, \textit{Enumerative Combinatorics}, CRC Press Series on Discrete Mathematics and its Applications. Chapman \& Hall/CRC, Boca Raton, FL, 2002.

\bibitem{Chen-CP-ITSF-2012}
C.-P. Chen, \emph{Sharp Wilker- and Huygens-type inequalities for inverse trigonometric and inverse hyperbolic functions}, Integral Transforms Spec. Funct. \textbf{23} (2012), no.~12, 865\nobreakdash--873; available online at \url{https://doi.org/10.1080/10652469.2011.644851}.

\bibitem{Comtet-Combinatorics-74}
L. Comtet, \textit{Advanced Combinatorics: The Art of Finite and Infinite Expansions}, Revised and Enlarged Edition, D. Reidel Publishing Co., 1974; available online at \url{https://doi.org/10.1007/978-94-010-2196-8}.

\bibitem{Davydychev-Kalmykov-2001}
A. I. Davydychev and M. Yu. Kalmykov, \emph{New results for the $\varepsilon$-expansion of certain one-, two- and three-loop Feynman diagrams}, Nuclear Phys. B \textbf{605} (2001), no.~1-3, 266\nobreakdash--318; available online at \url{https://doi.org/10.1016/S0550-3213(01)00095-5}.

\bibitem{Edwards-1982-DC}
J. Edwards, \textit{Differential Calculus}, 2nd ed., Macmillan, London, 1982.

\bibitem{Edwards-B-1899}
J. Edwards, \emph{Differential Calculus for Beginners}, Macmillan Co., Limited, London, 1899.

\bibitem{Gradshteyn-Ryzhik-Table-8th}
I. S. Gradshteyn and I. M. Ryzhik, \emph{Table of Integrals, Series, and Products}, Translated from the Russian, Translation edited and with a preface by Daniel Zwillinger and Victor Moll, Eighth edition, Revised from the seventh edition, Elsevier/Academic Press, Amsterdam, 2015; available online at \url{https://doi.org/10.1016/B978-0-12-384933-5.00013-8}.

\bibitem{Ser-Pow-Arcs-Arctan-Simp.tex}
B.-N. Guo, D. Lim, and F. Qi, \textit{Maclaurin series expansions for positive integer powers of inverse (hyperbolic) sine and tangent functions and for incomplete gamma functions with applications}, arXiv (2021), available online at \url{https://arxiv.org/abs/2101.10686v7}.

\bibitem{AIMS-Math20210491.tex}
B.-N. Guo, D. Lim, and F. Qi, \textit{Series expansions of powers of arcsine, closed forms for special values of Bell polynomials, and series representations of generalized logsine functions}, AIMS Math. \textbf{6} (2021), no.~7, 7494\nobreakdash--7517; available online at \url{https://doi.org/10.3934/math.2021438}.

\bibitem{Hansen-B-1975}
E. R. Hansen, \emph{A Table of Series and Products}, Prentice-Hall, Englewood Cliffs, NJ, USA, 1975.

\bibitem{Bessel-Power-Polyn.tex}
Y. Hong, B.-N. Guo, and F. Qi, \textit{Determinantal expressions and recursive relations for the Bessel zeta function and for a sequence originating from a series expansion of the power of modified Bessel function of the first kind}, CMES Comput. Model. Eng. Sci. \textbf{129} (2021), no.~1, 409\nobreakdash--423; available online at \url{https://doi.org/10.32604/cmes.2021.016431}.

\bibitem{Jameson-MZ-2016}
G. J. O. Jameson, \emph{The incomplete gamma functions}, Math. Gaz. \textbf{100} (2016), no.~548, 298\nobreakdash--306; available online at \url{https://doi.org/10.1017/mag.2016.67}.

\bibitem{Jolley-B-1961}
L. B. W. Jolley, \emph{Summation of Series}, 2nd revised ed., Dover Books on Advanced Mathematics Dover Publications, Inc., New York, 1961.

\bibitem{Kalmykov-Sheplyakov-lsjk-2005}
M. Yu. Kalmykov and A. Sheplyakov, \emph{lsjk--a C++ library for arbitrary-precision numeric evaluation of the generalized log-sine functions}, Computer Phys. Commun. \textbf{172} (2005), no.~1, 45\nobreakdash--59; available online at \url{https://doi.org/10.1016/j.cpc.2005.04.013}.

\bibitem{Kobayashi-arXiv-2021}
M. Kobayashi, \textit{Integral representations for local dilogarithm and trilogarithm functions}, arXiv (2021), available online at \url{https://export.arxiv.org/abs/2108.12543}.

\bibitem{Konheim-Wrench-Klamkin-1962Monthly}
A. G. Konheim, J. W. Wrench Jr., and M. S. Klamkin, \emph{A well-known series}, Amer. Math. Monthly \textbf{69} (1962), no.~10, 1011\nobreakdash--1011.

\bibitem{Lehmer-Monthly-1985}
D. H. Lehmer, \emph{Interesting series involving the central binomial coefficient}, Amer. Math. Monthly \textbf{92} (1985), no.~7, 449\nobreakdash--457; available online at \url{http://dx.doi.org/10.2307/2322496}.

\bibitem{Lewin-B-1981}
L. Lewin, \emph{Polylogarithms and Associated Functions}, With a foreword by A. J. Van der Poorten, North-Holland Publishing Co., New York-Amsterdam, 1981; available online at \url{https://doi.org/10.1090/S0273-0979-1982-14998-9}.

\bibitem{Moll-Vignat-IJNT-2014}
V. H. Moll and C. Vignat, \emph{On polynomials connected to powers of Bessel functions}, Int. J. Number Theory \textbf{10} (2014), no.~5, 1245\nobreakdash--1257; available online at \url{https://doi.org/10.1142/S1793042114500249}.

\bibitem{Tan-Der-App-Thanks.tex}
F. Qi, \textit{Derivatives of tangent function and tangent numbers}, Appl. Math. Comput. \textbf{268} (2015), 844\nobreakdash--858; available online at \url{http://dx.doi.org/10.1016/j.amc.2015.06.123}.

\bibitem{1st-Stirl-No-adjust.tex}
F. Qi, \textit{Diagonal recurrence relations for the Stirling numbers of the first kind}, Contrib. Discrete Math. \textbf{11} (2016), no.~1, 22\nobreakdash--30; available online at \url{https://doi.org/10.11575/cdm.v11i1.62389}.

\bibitem{Maclaurin-series-arccos-v3.tex}
F. Qi, \textit{Explicit formulas for partial Bell polynomials, Maclaurin's series expansions of real powers of inverse (hyperbolic) cosine and sine, and series representations of powers of Pi}, Research Square (2021), available online at \url{https://doi.org/10.21203/rs.3.rs-959177/v3}.

\bibitem{ingamma}
F. Qi, \textit{Monotonicity results and inequalities for the gamma and incomplete gamma functions}, Math. Inequal. Appl. \textbf{5} (2002), no.~1, 61\nobreakdash--67; available online at \url{http://dx.doi.org/10.7153/mia-05-08}.

\bibitem{Taylor-arccos-v2.tex}
F. Qi, \textit{Taylor's series expansions for real powers of functions containing squares of inverse (hyperbolic) cosine functions, explicit formulas for special partial Bell polynomials, and series representations for powers of circular constant}, arXiv (2021), available online at \url{https://arxiv.org/abs/2110.02749v2}.

\bibitem{Qi-Chen-Lim-RNA.tex}
F. Qi, C.-P. Chen, and D. Lim, \textit{Several identities containing central binomial coefficients and derived from series expansions of powers of the arcsine function}, Results Nonlinear Anal. \textbf{4} (2021), no.~1, 57\nobreakdash--64; available online at \url{https://doi.org/10.53006/rna.867047}.

\bibitem{AADM-2821.tex}
F. Qi and B.-N. Guo, \textit{A diagonal recurrence relation for the Stirling numbers of the first kind}, Appl. Anal. Discrete Math. \textbf{12} (2018), no.~1, 153\nobreakdash--165; available online at \url{https://doi.org/10.2298/AADM170405004Q}.

\bibitem{Spec-Bell2Euler-S.tex}
F. Qi and B.-N. Guo, \textit{Explicit formulas for special values of the Bell polynomials of the second kind and for the Euler numbers and polynomials}, Mediterr. J. Math. \textbf{14} (2017), no.~3, Art.~140, 14~pages; available online at \url{https://doi.org/10.1007/s00009-017-0939-1}.

\bibitem{Catalan-Int-Surv.tex}
F. Qi and B.-N. Guo, \textit{Integral representations of the Catalan numbers and their applications}, Mathematics \textbf{5} (2017), no.~3, Article~40, 31~pages; available online at \url{https://doi.org/10.3390/math5030040}.

\bibitem{qi-senlin-mia}
F. Qi and S.-L. Guo, \textit{Inequalities for the incomplete gamma and related functions}, Math. Inequal. Appl. \textbf{2} (1999), no.~1, 47\nobreakdash--53; available online at \url{http://dx.doi.org/10.7153/mia-02-05}.

\bibitem{Qi-Mei-99-gamma}
F. Qi and J.-Q. Mei, \textit{Some inequalities of the incomplete gamma and related functions}, Z. Anal. Anwendungen \textbf{18} (1999), no.~3, 793\nobreakdash--799; available online at \url{http://dx.doi.org/10.4171/ZAA/914}.

\bibitem{CDM-68111.tex}
F. Qi, D.-W. Niu, D. Lim, and B.-N. Guo, \textit{Closed formulas and identities for the Bell polynomials and falling factorials}, Contrib. Discrete Math. \textbf{15} (2020), no.~1, 163\nobreakdash--174; available online at \url{https://doi.org/10.11575/cdm.v15i1.68111}.

\bibitem{Bell-value-elem-funct.tex}
F. Qi, D.-W. Niu, D. Lim, and Y.-H. Yao, \textit{Special values of the Bell polynomials of the second kind for some sequences and functions}, J. Math. Anal. Appl. \textbf{491} (2020), no.~2, Article 124382, 31~pages; available online at \url{https://doi.org/10.1016/j.jmaa.2020.124382}.

\bibitem{AJOM-D-16-00138.tex}
F. Qi, X.-T. Shi, and F.-F. Liu, \textit{Expansions of the exponential and the logarithm of power series and applications}, Arab. J. Math. (Springer) \textbf{6} (2017), no.~2, 95\nobreakdash--108; available online at \url{https://doi.org/10.1007/s40065-017-0166-4}.

\bibitem{Wilf-Ward-2010P.tex}
F. Qi and M. D. Ward, \textit{Closed-form formulas and properties of coefficients in Maclaurin's series expansion of Wilf's function}, arXiv (2021), available online at \url{https://arxiv.org/abs/2110.08576v1}.

\bibitem{centr-bell-polyn.tex}
F. Qi, G.-S. Wu, and B.-N. Guo, \textit{An alternative proof of a closed formula for central factorial numbers of the second kind}, Turk. J. Anal. Number Theory \textbf{7} (2019), no.~2, 56\nobreakdash--58; available online at \url{https://doi.org/10.12691/tjant-7-2-5}.

\bibitem{Deriv-Arcs-Cos.tex}
F. Qi and M.-M. Zheng, \textit{Explicit expressions for a family of the Bell polynomials and applications}, Appl. Math. Comput. \textbf{258} (2015), 597\nobreakdash--607; available online at \url{https://doi.org/10.1016/j.amc.2015.02.027}.

\bibitem{Quaintance-Gould-2016-B}
J. Quaintance and H. W. Gould, \emph{Combinatorial Identities for Stirling Numbers}. The unpublished notes of H. W. Gould. With a foreword by George E. Andrews. World Scientific Publishing Co. Pte. Ltd., Singapore, 2016.

\bibitem{Schwatt-B-2nd1924}
I. J. Schwatt, \emph{An Introduction to the Operations with Series}, Chelsea Publishing Co., New York, 1924; available online at \url{http://hdl.handle.net/2027/wu.89043168475}.

\bibitem{Schwatt-Phil.Mag.1916}
I. J. Schwatt, \emph{Notes on the expansion of a function}, Phil. Mag. \textbf{31} (1916), 590\nobreakdash--593.

\bibitem{Spiegel-Monthly-1953}
M. R. Spiegel, \emph{Some interesting series resulting from a certain Maclaurin expansion}, Amer. Math. Monthly \textbf{60} (1953), no.~4, 243\nobreakdash--247; available online at \url{https://doi.org/10.2307/2307433}.

\bibitem{Sprugnoli-Gould-2006}
R. Sprugnoli, \textit{Riordan Array Proofs of Identities in Gould’s Book}, University of Florence, Italy, 2006.

\bibitem{Temme-96-book}
N. M. Temme, \emph{Special Functions: An Introduction to Classical Functions of Mathematical Physics}, A Wiley-Interscience Publication, John Wiley \& Sons, Inc., New York, 1996; available online at \url{http://dx.doi.org/10.1002/9781118032572}.

\bibitem{Wilf-GF-2006-3rd}
H. S. Wilf, \emph{generatingfunctionology}, Third edition. A K Peters, Ltd., Wellesley, MA, 2006.

\bibitem{Zhang-Chen-JMI-2020}
B. Zhang and C.-P. Chen, \emph{Sharp Wilker and Huygens type inequalities for trigonometric and inverse trigonometric functions}, J. Math. Inequal. \textbf{14} (2020), no.~3, 673\nobreakdash--684; available online at \url{https://doi.org/10.7153/jmi-2020-14-43}.

\end{thebibliography}
\end{document}